\newtheorem{theorem}{Theorem}[section]
\newtheorem{claim}{Claim}[theorem]
\newtheorem{lemma}[theorem]{Lemma}
\newtheorem{fact}[theorem]{Fact}	
\newtheorem{cor}[theorem]{Corollary}
\newtheorem{mainthm}{Theorem}
\newtheorem{maincor}[mainthm]{Corollary}
\theoremstyle{definition}
\newtheorem{defn}[theorem]{Definition}
\theoremstyle{remark}
\newtheorem{remark}[theorem]{Remark}
\DeclareMathOperator{\ubd}{{\sf unbounded}}
\DeclareMathOperator{\seq}{Seq}
\DeclareMathOperator{\otp}{otp}
\DeclareMathOperator{\acc}{acc}
\DeclareMathOperator{\ad}{AD}
\DeclareMathOperator{\cf}{cf}
\DeclareMathOperator{\dom}{dom}
\DeclareMathOperator{\im}{Im}
\DeclareMathOperator{\cov}{cov}
\DeclareMathOperator{\cof}{cof}
\newcommand\s{\subseteq}
\newcommand*\axiomfont[1]{\textsf{\textup{#1}}}
\newcommand\ma{\axiomfont{MA}}
\def\br{\blacktriangleright}
\newcommand\bd{\textup{bd}}
\def\dl{D\ell}
\newcommand{\stick}{{\ensuremath \mspace{2mu}\mid\mspace{-12mu} {\raise0.6em\hbox{$\bullet$}}}}
\date{Preprint as of September 21, 2022. For the latest version, visit \textsf{http://p.assafrinot.com/54}.}
\author{Assaf Rinot}
\address{Department of Mathematics, Bar-Ilan University, Ramat-Gan 5290002, Israel.}
\urladdr{http://www.assafrinot.com}
\author{Roy Shalev}
\address{Department of Mathematics, Bar-Ilan University, Ramat-Gan 5290002, Israel.}
\urladdr{https://roy-shalev.github.io/}
\author{Stevo Todorcevic}
\address{Department of Mathematics, University of Toronto, Toronto, Canada, M5S 2E4. Institut de Math\'ematiques de Jussieu, UMR 7586, 2 pl. Jussieu, Case 7012, 75251 Paris Cedex 05, France. Mathematical Institute SANU, Kneza Mihaila 36, 11001 Belgrade, Serbia.}
\email{stevo@math.toronto.edu}
\email{stevo.todorcevic@imj-prg.fr}
\email{stevo@mi.sanu.ac.rs}
\title{A new small Dowker space}
\subjclass[2010]{Primary 03E05, 54G20; Secondary 03E65}
\begin{document}
\begin{abstract} It is proved that if there exists a Luzin set, or if either $\stick$ or $\diamondsuit(\mathfrak b)$ hold,
then a strong instance of the guessing principle $\clubsuit_{\ad}$ holds at the first uncountable cardinal.
In particular, any of the above hypotheses entails the existence of a Dowker space of size $\aleph_1$.
\end{abstract}
\maketitle
 
\section{Introduction}
A \emph{Dowker space} is a normal topological space $\mathbb X$ whose product with the unit
interval $\mathbb X\times[0,1]$ is not normal. Whether such a space exists was asked by Dowker back in 1951 \cite{Dowker_C.H.}.
As of now, there are just three constructions of Dowker spaces in ZFC: Rudin's space of size $(\aleph_\omega)^{\aleph_0}$ \cite{Rudin_first_Dowker_ZFC_example}, Balogh's space of size continuum \cite{Balogh_space}, and the Kojman-Shelah space of size $\aleph_{\omega+1}$ \cite{MR1605988}.
Rudin's \emph{Conjecture~4} from \cite{MR1078646},
asserting that there exists a Dowker space of size $\aleph_1$ remains open.

\medskip

In \cite{paper48}, the guessing principle $\clubsuit_{\ad}$ was introduced,
and it was shown that 
for every regular uncountable cardinal $\kappa$, each of the following two implies the existence of a Dowker space of size $\kappa$:
\begin{enumerate}
\item[(i)] $\clubsuit_{\ad}(\mathcal S,1,2)$ holds
for a partition $\mathcal S$ of some nonreflecting stationary subset of $\kappa$
into infinitely many stationary sets;
\item[(ii)] $\clubsuit_{\ad}(\{E^{\kappa}_{\lambda}\},\lambda,1)$ holds,
where $\kappa$ is the successor of a regular cardinal $\lambda$.
\end{enumerate}

It was also shown that in each of the scenarios of \cite{kappa_Dowker_from_souslin_Rudin,de_Caux_space,MR628595,Good_Dowker_large_cardinals} in which there exists a Dowker space of size $\kappa$,
either (i) or (ii) indeed hold.

The exact definition of the guessing principle under discussion reads as follows:

\begin{defn}\label{clubad}  Let $\mathcal S$ be a collection of stationary subsets of a regular uncountable cardinal $\kappa$,
and $\mu,\theta$ be nonzero cardinals $<\kappa$.
The principle 
$\clubsuit_{\ad}(\mathcal S,\mu,\theta)$ asserts the existence of a sequence $\langle \mathcal A_\alpha\mid \alpha\in\bigcup\mathcal S\rangle$ such that:
\begin{enumerate}
\item For every $\alpha\in\acc(\kappa)\cap\bigcup\mathcal S$, $\mathcal A_\alpha$ is a pairwise disjoint family of $\mu$ many cofinal subsets of $\alpha$;
\item For every $\mathcal B\s[\kappa]^\kappa$ of size $\theta$, for every $S\in\mathcal S$, there are stationarily many $\alpha\in S$ such that $\sup(A\cap B)=\alpha$ for all $A\in\mathcal A_\alpha$ and $B\in\mathcal B$;
\item For all $A\neq A'$ from $\bigcup_{S\in\mathcal S}\bigcup_{\alpha\in S}\mathcal A_\alpha$, $\sup(A\cap A')<\sup(A)$.
\end{enumerate} 
\end{defn}
\begin{remark} 
The variation $\clubsuit_{\ad}(\mathcal S,\mu,{<}\theta)$ is defined in the obvious way.
\end{remark}

In \cite{two_more_S_spaces}, Juh\'asz, Kunen and Rudin constructed
a Dowker space of size $\aleph_1$ assuming the continuum hypothesis.
This was then improved by the third author \cite[p.~53]{TodPartTop} who got such a space from the existence of a Luzin set (cf.~\cite{MR1283587}).

The first main result of this paper shows that also in the above scenario, (i) and (ii)  hold.
This answers Question~2.35 of \cite{paper48} in the negative.
 
\begin{mainthm}\label{thma} Let $\lambda=\lambda^{<\lambda}$ be an infinite regular cardinal.

Then $(1)\implies(2)\implies(3)$:
\begin{enumerate}
\item There exists a $\lambda^+$-Luzin subset of ${}^{\lambda}\lambda$;
\item There exists a tight strongly unbounded coloring $c:\lambda\times\lambda^+\rightarrow\lambda$;
\item For every partition $\mathcal S$ of $E^{\lambda^+}_{\lambda}$ into stationary sets,
$\clubsuit_{\ad}(\mathcal S,\lambda,{<}\lambda)$ holds.
\end{enumerate}
\end{mainthm}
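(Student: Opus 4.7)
The plan is to address $(1)\Rightarrow(2)$ and $(2)\Rightarrow(3)$ separately.

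For $(1)\Rightarrow(2)$, I would enumerate a $\lambda^+$-Luzin subset of ${}^\lambda\lambda$ as $\langle f_\alpha : \alpha < \lambda^+\rangle$ and simply set $c(i,\alpha):=f_\alpha(i)$. The verification that $c$ is tight and strongly unbounded proceeds by translating each potential witness of failure into a meager condition on $f_\alpha$ in the standard bounded-box topology on ${}^\lambda\lambda$. Concretely, the assertion that $c(\cdot,\alpha)$ attains a color $\tau$ cofinally often along a prescribed cofinal $X\subseteq\lambda$ is the intersection, over $j<\lambda$, of the dense open sets $\{f : \exists i\in X\setminus j,\ f(i)=\tau\}$; its failure is therefore nowhere dense. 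Since $\lambda=\lambda^{<\lambda}$ the topology has weight $\lambda$, so the relevant collections of parameters indexing such meager sets have cardinality $\lambda$, and the $\lambda^+$-Luzin hypothesis caps the offending $\alpha$'s by $\lambda$ in each instance.

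For $(2)\Rightarrow(3)$, fix a partition $\mathcal S$ of $E^{\lambda^+}_\lambda$ into stationary sets, together with a $C$-sequence $\langle C_\alpha : \alpha < \lambda^+\rangle$ with $\otp(C_\alpha)=\lambda$ for $\alpha\in E^{\lambda^+}_\lambda$. Writing $e_\alpha:\lambda\to C_\alpha$ for the increasing enumeration, I set $A^\tau_\alpha:=\{e_\alpha(i) : c(i,\alpha)=\tau\}$ and let $\mathcal A_\alpha$ be a sub-collection of size $\lambda$ consisting of cofinal $A^\tau_\alpha$'s (that this is achievable on a club of $\alpha$ will follow from applying unboundedness to the family $\{C_\alpha\}$ and then pigeonholing the colors). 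Pairwise disjointness within $\mathcal A_\alpha$ is automatic from fibering. For clause~(2) of Definition~\ref{clubad}, given $\mathcal B\in[[\lambda^+]^{\lambda^+}]^{<\lambda}$ and $S\in\mathcal S$, on a club of $\alpha$ each $e_\alpha^{-1}[B]$ is cofinal in $\lambda$; the strongly unbounded property of $c$ then delivers stationarily many $\alpha\in S$ on which $c(\cdot,\alpha)$ attains every $\tau<\lambda$ cofinally on each such $e_\alpha^{-1}[B]$, yielding $\sup(A^\tau_\alpha\cap B)=\alpha$ as required.

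The main obstacle is clause~(3) of Definition~\ref{clubad}: for distinct $A^\tau_\alpha, A^{\tau'}_{\alpha'}$ in the resulting sequence one needs $\sup(A^\tau_\alpha\cap A^{\tau'}_{\alpha'})<\sup(A^\tau_\alpha)$. The case $\alpha=\alpha'$ is free, but the mixed case $\alpha\neq\alpha'$ is the heart of the matter, since two clubs $C_\alpha, C_{\alpha'}$ with $\alpha'<\alpha$ can a priori overlap cofinally in $\alpha'$. My plan is to lean on precisely what the word \emph{tight} is designed to deliver: a uniform way to thin the $A^\tau_\alpha$'s so that their traces across different $\alpha$'s are automatically controlled by the coloring. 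I would combine this with a mild bookkeeping choice of the $C$-sequence (no $\square$-type hypothesis is available from $\lambda=\lambda^{<\lambda}$ alone), so that for every $\alpha'<\alpha$ the coloring values $c(i,\alpha)$ along $e_\alpha^{-1}[\alpha'\cap C_\alpha]$ separate $\tau$ from $\tau'$ past some bound. I expect the resolution to require a carefully calibrated simultaneous choice of the enumerations $e_\alpha$ together with a pruning of which colors $\tau$ are retained in $\mathcal A_\alpha$, rather than either ingredient in isolation.
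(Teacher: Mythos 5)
Your $(1)\Rightarrow(2)$ is essentially the paper's argument: enumerate the Luzin set, let $c_\beta:=f_\beta$, get strong unboundedness from the defining property of the Luzin set (a condition $t$ all of whose extensions are realized in $\{f_\beta\mid\beta\in B\}$ gives $\{c_\beta(\dom(t))\mid \beta\in B,\ t\subseteq c_\beta\}=\lambda$), and get tightness because the trees $\mathbb T_s=\{t\mid s\subseteq t\text{ or }t\subseteq s\}$, $s\in{}^{<\lambda}\lambda$, form a dense subfamily of $\mathcal T_c$ of size $\lambda^{<\lambda}=\lambda$. (Your phrasing of strong unboundedness --- ``attains a color $\tau$ cofinally often along a cofinal $X\subseteq\lambda$'' --- is not the actual definition, but the mechanism you invoke is the right one.)

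The $(2)\Rightarrow(3)$ step has two genuine gaps. First, by taking $A^\tau_\alpha\subseteq C_\alpha$ for a ladder system, clause (2) of Definition~\ref{clubad} already fails at the step you call routine: for a fixed ladder system and an arbitrary cofinal $B\subseteq\lambda^+$, it is \emph{not} true that $e_\alpha^{-1}[B]$ is cofinal in $\lambda$ for club many $\alpha$ --- that assertion is a club-guessing/$\clubsuit$-type principle (for $\lambda=\aleph_0$ it consistently fails for every ladder system, e.g.\ under $\ma_{\aleph_1}$), and it does not follow from the existence of the coloring $c$. The paper's sets $A^i_{\alpha,j}$ are not subsets of any ladder: $\xi$ ranges over all of $\alpha$, subject to the constraint $e(\xi,\alpha)\le\hat d_{\alpha,j}(\Delta(d_{\xi,0},d_{\alpha,j}))$, where $d_{\xi,0}$ is a fiber of $c$ attached to $\xi$ itself; the guessing is then extracted by an elementary-submodel argument from strong unboundedness applied to $\{\beta_{\alpha,j}\mid\alpha\in S^3\}$. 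Second, you delegate the almost-disjointness clause to ``tightness,'' but tightness does not and cannot do that job: in the paper it is used only to pre-index the guessing targets (each $S\in\mathcal S$ is partitioned into stationary sets $S_\sigma$ indexed by short sequences from a dense subfamily of $\mathcal T_c$, and $\beta_{\alpha,j}$ is drawn from $[\mathbb T_j]_c$), which is what lets one handle ${<}\lambda$ many sets $B$ simultaneously. Almost-disjointness comes from a separate ingredient you never introduce: since $E^{\lambda^+}_\lambda$ is nonreflecting, there is an $\mathbb S$-coherent coloring $e:[\lambda^+]^2\rightarrow\lambda$ with $\mathbb S\subseteq\partial(e)$, and the membership condition above forces any cofinal intersection $A^i_{\alpha,j}\cap A^{i'}_{\beta,j'}$ to produce a set of $\xi<\alpha$ with $e(\xi,\beta)$ bounded, contradicting $\alpha\in\partial(e)$. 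Without some substitute for this coherent-coloring-plus-$\Delta$ mechanism, the ``heart of the matter'' you identify remains unproved, and no calibration of the enumerations $e_\alpha$ alone will supply it.
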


Note that as in the case $\lambda:=\aleph_0$, for every infinite cardinal $\lambda=\lambda^{<\lambda}$,
the existence of a $\lambda^+$-Luzin subset of ${}^{\lambda}\lambda$ follows from $2^\lambda=\lambda^+$.
Our second main result derives $\clubsuit_{\ad}$ from another consequence of $2^\lambda=\lambda^+$, namely,
from the \emph{stick} principle:

\begin{mainthm}\label{thmb} Let $\lambda$ be an infinite regular cardinal.

Then $\stick(\lambda^+)$ implies that
for every partition $\mathcal S$ of $E^{\lambda^+}_{\lambda}$ into stationary sets,
$\clubsuit_{\ad}(\mathcal S,\lambda,\lambda)$ holds.
\end{mainthm}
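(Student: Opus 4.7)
Fix a witness $\vec x=\langle x_\alpha \mid \alpha<\lambda^+\rangle$ to $\stick(\lambda^+)$. My overall strategy will be to spread each stick set across $\lambda$-many coordinate layers via a bijection, and to reduce multi-set guessing to single-set stick via a coded version of the test family.

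As preparation, I would carry out a routine strengthening of $\vec x$ to arrange that, for each $\alpha \in E^{\lambda^+}_\lambda$, $x_\alpha$ is a cofinal subset of $\alpha$ of order type $\lambda$, and that for every $B \in [\lambda^+]^{\lambda^+}$ and every stationary $S\s E^{\lambda^+}_\lambda$, the set $\{\alpha \in S \mid x_\alpha \s B\}$ is stationary. Next, I would fix a bijection $\pi:\lambda\times\lambda\times\lambda^+\to\lambda^+$ — for instance, $\pi(i,j,\gamma):=\lambda^2\gamma+\lambda i+j$ — whose restriction to $\lambda\times\lambda\times\alpha$ is a bijection onto $\alpha$ for $\alpha$ in a club $D\s\lambda^+$, and let $f=(f_1,f_2):\lambda^+\to\lambda\times\lambda$ record the first two coordinates of $\pi^{-1}$. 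For $\alpha\in D\cap E^{\lambda^+}_\lambda$, I would set
\[
A_{\alpha,i} := x_\alpha \cap f_1^{-1}(\{i\})
\]
and $\mathcal A_\alpha := \{A_{\alpha,i}\mid i<\lambda\}$. These sets are pairwise disjoint subsets of $\alpha$ by disjointness of the color classes; cofinality of each $A_{\alpha,i}$ (the remaining piece of clause~(1)) will follow after imposing the enhancement that $x_\alpha$ meets each $f_1$-fiber cofinally in $\alpha$, itself a further strengthening of stick obtained by applying single-set stick to a suitably coded target.

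For clause~(2), given $\mathcal B=\{B_j\mid j<\lambda\}$ and $S\in\mathcal S$, I would form the encoded set
\[
B^\bullet := \bigcup_{i,j<\lambda} \pi_{i,j}[B_j] \quad\text{where}\quad \pi_{i,j}(\gamma):=\pi(i,j,\gamma).
\]
Since the ranges of the $\pi_{i,j}$ are pairwise disjoint and each $|\pi_{i,j}[B_j]|=\lambda^+$, we get $|B^\bullet|=\lambda^+$, so by strengthened stick, stationarily many $\alpha\in S$ satisfy $x_\alpha\s B^\bullet$. For such $\alpha$, every $\xi\in x_\alpha$ decomposes uniquely as $\xi=\pi_{i,j}(\beta)$ with $\beta\in B_j$. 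The subtle combinatorial point — and the one where the proof does its nontrivial work — is to arrange matters so that, for each $(i,j)$, the elements of $x_\alpha$ encoding pairs with second coordinate $j$ project cofinally to $B_j$, yielding $\sup(A_{\alpha,i}\cap B_j)=\alpha$ for all $i,j<\lambda$.

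The main technical obstacle I expect is clause~(3), the almost-disjointness across distinct $\alpha$'s. For $A\in\mathcal A_\alpha$ and $A'\in\mathcal A_{\alpha'}$ of distinct colors this is automatic, but for the same color we need $x_\alpha \cap x_{\alpha'}$ to be bounded in $\min(\alpha,\alpha')$. I would arrange this by a Fodor-style recursive thinning of $\vec x$: at each $\alpha \in E^{\lambda^+}_\lambda$, only $<\lambda^+$ earlier sets need be avoided, and the flexibility of stick permits replacing $x_\alpha$ by a cofinal subset with bounded intersection with each predecessor while retaining the stationary guessing property.
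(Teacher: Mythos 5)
Your proposal founders at the very first step. The ``routine strengthening'' you invoke --- a sequence $\langle x_\alpha\mid\alpha\in E^{\lambda^+}_\lambda\rangle$ with each $x_\alpha$ cofinal in $\alpha$ of order type $\lambda$ such that for every $B\in[\lambda^+]^{\lambda^+}$ (and every stationary $S$) there are stationarily many $\alpha\in S$ with $x_\alpha\s B$ --- is not a rearrangement of a $\stick(\lambda^+)$-sequence; it is precisely (a relativized form of) $\clubsuit(E^{\lambda^+}_\lambda)$, which is strictly stronger than $\stick(\lambda^+)$. For $\lambda=\aleph_0$: $\ch$ implies $\stick$, but $\clubsuit+\ch$ is equivalent to $\diamondsuit$, and there are models of $\ch+\neg\diamondsuit$; in such a model $\stick$ holds while your strengthened sequence cannot exist. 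The defect is structural: $\stick$ gives, for each target $B$, \emph{some} $\beta$ with $x_\beta\s B$, with no control over where $\beta$ sits and no requirement that $x_\beta$ be cofinal in $\beta$; no thinning or re-indexing converts ``some $\beta$'' into ``stationarily many $\alpha$ in every stationary $S$, with $x_\alpha$ a ladder at $\alpha$.'' Since your clauses (1) and (2) both rest on this, the argument does not get off the ground. The way around this obstacle (and it is the heart of the matter) is \emph{not} to place $x_\beta$ at $\beta$: instead, at each $\delta\in E^{\lambda^+}_\lambda$ one fixes a surjection $e_\delta:\lambda\rightarrow\delta$ and builds $\mathcal A_\delta$ by diagonalizing against \emph{all} the sets $x_{e_\delta(j)}$, $j<\lambda$. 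Then, for a given cofinal $X$, one chooses witnesses $x_{\beta_\epsilon}\s X\setminus\epsilon$ for $\epsilon<\lambda^+$ and observes that every $\delta$ in a certain \emph{club} (namely, those $\delta$ closed under $\epsilon\mapsto\beta_\epsilon\cup x_{\beta_\epsilon}$) guesses $X$; a club meets every stationary $S$, which is exactly why the conclusion holds for \emph{every} partition $\mathcal S$.

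Two further points would remain even if the guessing were repaired. First, your treatment of clause (3) (``Fodor-style recursive thinning'') does not work as stated: at stage $\alpha$ there may be $\lambda$ many predecessors $A_{\bar\alpha}$, and a priori their union can cover a tail of $x_\alpha$, so one cannot simply pass to a cofinal subset of $x_\alpha$ with bounded intersection with each of them. One needs the Chen--Garti--Weinert reformulation of $\stick(\lambda^+)$ (that the sequence may be chosen so that $x_\beta\setminus\bigcup_{\alpha\in a}A_\alpha$ has size $\lambda$ for every $a\in[\lambda^+]^{<\cf(\lambda)}$, uniformly against any almost-disjoint family), together with a point-by-point recursion in which the $j$-th point avoids only the first $j$ predecessors, so that $A_{\bar\alpha}\cap A_\alpha$ is trapped in an initial segment of the enumeration. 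Second, your coding for clause (2) conflates codes with the objects coded: $A_{\alpha,i}=x_\alpha\cap f_1^{-1}\{i\}$ consists of ordinals of the form $\pi(i,j,\gamma)$, which need not lie in $B_j$ at all, so $\sup(A_{\alpha,i}\cap B_j)=\alpha$ does not follow from the projections being cofinal in $B_j$; the ``subtle combinatorial point'' you defer is in fact the entire content of the claim.
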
 

What's interesting about Theorem~B is that it uncovers 
a scenario for the existence of a Dowker space that was not known before.
In particular, it yields the following contribution to the small Dowker space problem:

\begin{maincor} $\stick$ entails the existence of a Dowker spaces of size $\aleph_1$. In fact, it entails the existence of $2^{\aleph_1}$ many pairwise nonhomeomorphic such spaces.
\end{maincor}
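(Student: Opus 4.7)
The plan is to reduce Corollary~C to Theorem~B combined with scenario~(i) from the Introduction (as proved in \cite{paper48}).

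For the first assertion, specialize Theorem~B to $\lambda:=\aleph_0$, so that $\stick$ delivers $\clubsuit_{\ad}(\mathcal S,\aleph_0,\aleph_0)$ for every partition $\mathcal S$ of $E^{\aleph_1}_{\aleph_0}$ into stationary sets. By Solovay's theorem, fix such a partition $\mathcal S=\{S_n:n<\omega\}$ of countably infinite length; since no ordinal below $\aleph_1$ has uncountable cofinality, each $S_n$ is trivially nonreflecting in $\aleph_1$. From the resulting sequence one extracts a $\clubsuit_{\ad}(\mathcal S,1,2)$-sequence by discarding all but one cofinal subset in each $\mathcal A_\alpha$ and noting that the hitting condition against size-$2$ subfamilies of $[\aleph_1]^{\aleph_1}$ is a formal consequence of the condition against size-$\aleph_0$ subfamilies (pad any size-$2$ family with distinct cofinal sets up to size~$\aleph_0$). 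Scenario~(i) then furnishes a Dowker space of size~$\aleph_1$.

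For the ``in fact'' clause, the plan is to amplify by varying the input. Apply Solovay's theorem once more to refine $E^{\aleph_1}_{\aleph_0}$ into a disjoint family $\{T_\xi:\xi<\aleph_1\}$ of stationary sets. For each $A\s\aleph_1$, build a coarsening $\mathcal S^A$ of $\{T_\xi\mid \xi<\aleph_1\}$ indexed by $A$ --- for instance, amalgamate $T_{2\xi}\cup T_{2\xi+1}$ into a single piece exactly when $\xi\in A$ --- so that $\mathcal S^A$ remains a partition of $E^{\aleph_1}_{\aleph_0}$ into infinitely many stationary sets. Running the first step with $\mathcal S^A$ in place of $\mathcal S$ produces a Dowker space $X_A$ of size~$\aleph_1$.

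The main obstacle is then to prove $X_A\not\cong X_{A'}$ for sufficiently many pairs $A,A'$. The plan is to exhibit a homeomorphism-invariant assignment $\Phi:X_A\mapsto[\Phi(X_A)]\in\mathcal P(\aleph_1)/\mathrm{NS}$ read off from intrinsic topological data of $X_A$ (for instance, from the convergence structure at accumulation points, which in the construction of \cite{paper48} carries the imprint of the partition class to which each such point belongs), such that $\Phi(X_A)$ recovers the $\mathrm{NS}$-class of a canonically $A$-coded stationary subset of $\aleph_1$. Since $|\mathcal P(\aleph_1)/\mathrm{NS}|=2^{\aleph_1}$, one can then select $2^{\aleph_1}$ many $A$'s whose canonical codes are pairwise $\mathrm{NS}$-inequivalent, yielding $2^{\aleph_1}$ pairwise nonhomeomorphic small Dowker spaces.
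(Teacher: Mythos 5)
Your derivation of the first assertion is correct and is essentially the paper's: apply Theorem~B with $\lambda=\aleph_0$ (noting that every stationary subset of $E^{\aleph_1}_{\aleph_0}$ is vacuously nonreflecting), observe that $\clubsuit_{\ad}(\mathcal S,\aleph_0,\aleph_0)$ formally implies $\clubsuit_{\ad}(\mathcal S,1,2)$ by discarding sets and padding families, and invoke scenario~(i).

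For the ``in fact'' clause, however, there is a genuine gap. Your entire argument for nonhomeomorphism rests on the existence of a homeomorphism-invariant $\Phi$ that ``recovers the $\mathrm{NS}$-class of a canonically $A$-coded stationary set'' from ``intrinsic topological data'', but you never construct $\Phi$ or prove it is an invariant. This is not a routine step one can defer: it is precisely the mathematical content of the paper's Appendix (Theorem~\ref{manyd}), and it occupies the bulk of that section. The paper also proceeds differently: rather than varying the partition fed into Theorem~B over $2^{\aleph_1}$ many coarsenings $\mathcal S^A$ (which would require re-running the whole construction and then comparing outputs of \emph{a priori} unrelated constructions), it fixes a \emph{single} partition $\mathcal S$ of $E^{\aleph_1}_{\aleph_0}$ into $\aleph_1$ many stationary sets, applies Theorem~B once to get one $\clubsuit_{\ad}(\mathcal S,1,2)$-sequence $\langle A_\alpha\rangle$, re-indexes $\mathcal S$ as $\langle S^\zeta_n\mid\zeta<\aleph_1,n<\omega\rangle$, and for each nonempty $Z\subseteq\aleph_1$ builds a ladder-system space $\mathbb X^Z$ whose ladders $L^Z_\alpha$ are the traces of $A_\alpha$ on $Z$-dependent unions of pieces. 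The invariant is then concrete: the set $S^Z$ of non-isolated points satisfies that $S^Z\setminus S^{Z'}$ is stationary whenever $Z\setminus Z'\neq\emptyset$, and a putative homeomorphism $f$ must satisfy $f^{-1}[\alpha]=\alpha$ on a club, so one finds $\alpha\in S^Z\setminus S^{Z'}$ with $f^{-1}[\alpha]=\alpha$ and derives a contradiction from a small $\tau^{Z'}$-open neighborhood of $f(\alpha)$ whose preimage misses the cofinal set $L^Z_\alpha$. Without an argument of this kind (or an explicit construction of your $\Phi$ together with a proof that the \cite{paper48} construction makes the partition classes topologically visible), your proposal establishes only the first assertion of the corollary.
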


\subsection{Organization of this paper}
In Section~\ref{Sec2}, we define \emph{unbounded}, \emph{strongly unbounded} and \emph{tight} colorings, and provide sufficient conditions for their existence. The proof of the implication $(1)\implies(2)$ of Theorem~\ref{thma} will be found there.

In Section~\ref{Sec3}, we get an instance of $\clubsuit_{\ad}(\ldots)$ from a tight strongly unbounded coloring. In particular, the proof of the implication $(2)\implies(3)$ of Theorem~\ref{thma} will be found there.

In Section~\ref{Sec4}, we get instances of $\clubsuit_{\ad}(\ldots)$ from the principles $\stick(\lambda^+)$ and $\diamondsuit(\mathfrak b)$.
In particular, the proof of Theorem~\ref{thmb} will be found there.

In the Appendix, we slightly extend Clause~(i) above, showing that for every regular uncountable cardinal $\kappa$, if $\clubsuit_{\ad}(\mathcal S,1,2)$ holds
for an infinite partition $\mathcal S$ of some nonreflecting stationary subset of $\kappa$
into $\mu$ many stationary sets,
then there are $2^\mu$ many pairwise nonhomeomorphic Dowker spaces of size $\kappa$.

\section{Unbounded colorings and generalized Luzin sets}\label{Sec2}
In this section, $\kappa$ denotes a regular uncountable cardinal, and $\nu$ and $\lambda$ denote infinite cardinals $<\kappa$.
\begin{defn}\label{deftrees} Suppose that $c:\nu\times\kappa\rightarrow\lambda$ is a coloring. 
\begin{itemize}
\item For each $\beta<\kappa$, derive the fiber map $c_\beta:\nu\rightarrow\lambda$ via $c_\beta(\eta):=c(\eta,\beta)$;
\item $c$ is \emph{unbounded} iff for every cofinal $B\s\kappa$, there is an $\eta<\nu$ such that 
$$\sup\{ c_\beta(\eta)\mid \beta\in B\}=\lambda;$$
\item $c$ is \emph{strongly unbounded} iff for every cofinal $B\s\kappa$, there are an $\eta<\nu$ 
and a map $t:\eta\rightarrow\lambda$ such that 
$$\sup\{ c_\beta(\eta)\mid \beta\in B\  \&\ t\s c_\beta\}=\lambda;$$
\item For every $\mathbb T\s{}^{<\nu}\lambda$, let $[\mathbb T]_c:=\{ \beta<\kappa\mid \forall\eta<\nu\,(c_\beta\restriction\eta\in\mathbb  T)\}$;
\item Set $\mathcal T_c:=\{ \mathbb T\s{}^{<\nu}\lambda\mid \sup([\mathbb T]_c)=\kappa\}$;
\item $c$ is \emph{tight} iff $\cf(\mathcal T_c,{\supseteq})\le\kappa$.
\end{itemize}
\end{defn}
\begin{remark}
It is clear that ${}^{<\nu}\lambda\in\mathcal T_c$, so that $1\le|\mathcal T_c|\le2^{(\lambda^{<\nu})}$.
\end{remark}

\subsection{From unbounded to strongly unbounded}
\begin{lemma}\label{lemma23} Suppose that $\lambda=\aleph_0$ or $\lambda$ is strongly inaccessible.

Then any unbounded coloring $c:\lambda\times\kappa\rightarrow\lambda$ 
is strongly unbounded.
\end{lemma}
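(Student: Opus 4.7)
The plan is to proceed by contraposition: assuming $c$ is not strongly unbounded, witnessed by a cofinal $B \subseteq \kappa$, I will show that $c$ fails to be unbounded on $B$. The hypothesis on $B$ reads: for every $\eta<\lambda$ and every $t \in {}^{\eta}\lambda$, the set
\[ X_t := \{\, c_\beta(\eta) \mid \beta \in B,\ t \subseteq c_\beta\,\} \]
is bounded in $\lambda$, and hence has cardinality strictly less than $\lambda$.

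The core device is the subtree $T := \{\, c_\beta \restriction \eta \mid \beta \in B,\ \eta < \lambda\,\}$ of ${}^{<\lambda}\lambda$, with levels $T_\eta := T \cap {}^{\eta}\lambda$. The immediate successors in $T$ of a node $s \in T$ are exactly the one-step extensions of $s$ whose last entry lies in $X_s$; in particular every node of $T$ has strictly fewer than $\lambda$ immediate successors.

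The key claim is that $|T_\eta| < \lambda$ for every $\eta < \lambda$, proved by induction on $\eta$. The successor step is standard: $|T_{\xi+1}| \leq |T_\xi| \cdot \sup_{s \in T_\xi}|X_s|$, a product of two cardinals $<\lambda$ (using regularity of $\lambda$ together with the inductive hypothesis to conclude that the supremum is $<\lambda$; in the case $\lambda = \aleph_0$ this degenerates to a finite sum of finite cardinals). The limit stage arises only in the strongly inaccessible regime, since $\omega$ admits no infinite limit ordinals below it. There, the map $s \mapsto \langle s \restriction \xi\rangle_{\xi<\eta}$ injects $T_\eta$ into $\prod_{\xi<\eta} T_\xi$, yielding $|T_\eta| \leq \mu^{|\eta|}$ with $\mu := \sup_{\xi<\eta}|T_\xi| < \lambda$; strong inaccessibility of $\lambda$ then provides $\mu^{|\eta|} < \lambda$.

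Granted the claim, for each $\eta<\lambda$ the set $\{\, c_\beta(\eta) \mid \beta \in B\,\}$ is contained in $\{\, s(\eta) \mid s \in T_{\eta+1}\,\}$, of cardinality at most $|T_{\eta+1}| < \lambda$; by regularity of $\lambda$ it is bounded in $\lambda$, contradicting the unboundedness of $c$ applied to $B$. The main obstacle, and the precise point where the dichotomy on $\lambda$ is used, is the limit stage of the inductive claim.
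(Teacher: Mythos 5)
Your proof is correct, but it takes a different route from the paper's. The paper argues directly: given a cofinal $B$, unboundedness yields some $\eta<\lambda$ with $\sup\{c_\beta(\eta)\mid\beta\in B\}=\lambda$, and taking the \emph{least} such $\eta$, all earlier coordinates take values inside a single $\mu<\lambda$ (by regularity of $\lambda$ and minimality of $\eta$), so the stems $c_\beta\restriction\eta$ for $\beta\in B$ all lie in ${}^\eta\mu$, a set of size $<\lambda$ by the hypothesis on $\lambda$; a pigeonhole over these fewer-than-$\lambda$ stems then produces the required $t$. You instead run the contrapositive and control the entire tree of stems level by level, using the failure of strong unboundedness at every node to bound the branching, and invoking the cardinal arithmetic ($\mu^{|\eta|}<\lambda$ for $\mu,|\eta|<\lambda$) only at limit levels. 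Both arguments use the dichotomy on $\lambda$ in exactly one place and for the same purpose; yours is somewhat longer but is modular and bounds the actual set of stems $T_\eta$ rather than the ambient ${}^\eta\mu$, whereas the paper's is shorter because the ``least $\eta$'' device replaces the whole induction by a single uniform bound. In either case the heart of the matter is that fewer than $\lambda$ many stems must split the $\lambda$ many values occurring at coordinate $\eta$, so one stem carries unboundedly many of them.
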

\begin{proof} Suppose that $c:\lambda\times\kappa\rightarrow\lambda$  is a given unbounded coloring. Let $B$
be some cofinal subset of $\kappa$.
By hypothesis, there exists an $\eta<\lambda$ such that 
$\sup\{ c_\beta(\eta)\mid \beta\in B\}=\lambda$.
For the least such $\eta$, it follows that there exists some ordinal $\mu<\lambda$ such that
$$\{  c_\beta(i)\mid \beta\in B, i<\eta\}\s \mu.$$
As $|{}^\eta\mu|<\cf(\lambda)=\lambda$, there must exist some $t\in{}^\eta\mu$ such that 
$$\sup\{ c_\beta(\eta)\mid \beta\in B\ \& \ t\s c_\beta\}=\lambda,$$
as sought.
\end{proof}

\begin{defn}[Shelah, {\cite[\S2]{Sh:107}}] For a regular uncountable cardinal $\lambda$:
\begin{enumerate}
\item $\dl_\lambda$ asserts the existence of a sequence $\langle \mathcal P_\eta\mid\eta<\lambda\rangle$ such that:
\begin{itemize}
\item  for every $\eta<\lambda$, $\mathcal P_\eta\s\mathcal P(\eta)$ and $|\mathcal P_\eta|<\lambda$;
\item for every  $A\s\lambda$, for stationarily many $\eta<\lambda$,
$A\cap\eta\in\mathcal P_\eta$. 
\end{itemize}
\item $\dl^*_\lambda$ asserts the existence of a sequence $\langle \mathcal P_\eta\mid\eta<\lambda\rangle$ such that:
\begin{itemize}
\item  for every $\eta<\lambda$, $\mathcal P_\eta\s\mathcal P(\eta)$ and $|\mathcal P_\eta|<\lambda$;
\item for every  $A\s\lambda$, for club many $\eta<\lambda$,
$A\cap\eta\in\mathcal P_\eta$. 
\end{itemize}
\end{enumerate}
\end{defn}

\begin{fact}[Shelah, {\cite[Claim 3.2]{sh:460} and \cite[Claim 2.5]{sh:922}}]\label{dlfacts} For a regular uncountable cardinal $\lambda$:
\begin{enumerate}
\item If $\lambda$ is strongly inaccessible, then $\dl^*_\lambda$ holds;
\item $\diamondsuit_\lambda$ implies $\dl_\lambda$,
and $\diamondsuit^*_\lambda$ implies $\dl^*_\lambda$;
\item If $\lambda\ge\beth_\omega$ then $\dl_\lambda$ iff $\lambda^{<\lambda}=\lambda$;
\item If $\lambda$ is a successor of an uncountable cardinal, then $\dl_\lambda$ iff $\lambda^{<\lambda}=\lambda$.
\end{enumerate}
\end{fact}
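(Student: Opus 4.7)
The plan is to handle the four parts separately, in increasing order of difficulty. Parts (1) and (2) admit immediate constructions. For (1), when $\lambda$ is strongly inaccessible, set $\mathcal{P}_\eta:=\mathcal{P}(\eta)$ for every $\eta<\lambda$; inaccessibility ensures $|\mathcal{P}_\eta|=2^{|\eta|}<\lambda$, and trivially $A\cap\eta\in\mathcal{P}_\eta$ for every $A\s\lambda$, so $\dl^*_\lambda$ holds in the strongest possible form. For (2), given a $\diamondsuit_\lambda$-sequence $\langle A_\eta\mid\eta<\lambda\rangle$, set $\mathcal{P}_\eta:=\{A_\eta\}$; the stationary guessing property of a diamond sequence is literally the $\dl_\lambda$-guessing condition for these singleton families, and the identical recipe with $\diamondsuit^*_\lambda$ yields $\dl^*_\lambda$.

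The forward direction of (3) and (4), $\dl_\lambda\Rightarrow\lambda^{<\lambda}=\lambda$, is a short counting argument that uses no hypothesis on $\lambda$ beyond regularity. Fix a witness $\langle\mathcal{P}_\eta\mid\eta<\lambda\rangle$ and any $X\in[\lambda]^{<\lambda}$; since $\lambda$ is regular, $X$ is bounded by some $\eta_0<\lambda$. Applying $\dl_\lambda$ to $X\s\lambda$ yields a stationary set of $\eta<\lambda$ with $X\cap\eta\in\mathcal{P}_\eta$; intersecting with the club of $\eta\ge\eta_0$ leaves a stationary set on which $X\cap\eta=X$, so $X\in\mathcal{P}_\eta$ for some (in fact stationarily many) $\eta$. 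Hence $[\lambda]^{<\lambda}\s\bigcup_{\eta<\lambda}\mathcal{P}_\eta$, and since each $|\mathcal{P}_\eta|<\lambda$ and $\lambda$ is regular, $|[\lambda]^{<\lambda}|\le\lambda$.

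The real obstacle is the converse $\lambda^{<\lambda}=\lambda\Rightarrow\dl_\lambda$, which requires the stated hypotheses on $\lambda$ and is where Shelah's deep machinery enters. For (4), with $\lambda=\mu^+$ and $\mu$ uncountable, I would enumerate $[\lambda]^{<\lambda}$ as $\langle X_\alpha\mid\alpha<\lambda\rangle$ using $\lambda^{<\lambda}=\lambda$, and at each $\eta\in E^\lambda_\mu$ distill $\mathcal{P}_\eta$ from the family $\{X_\alpha\mid\alpha<\eta\}$, exploiting that ordinals of uncountable cofinality $\mu$ absorb initial segments of subsets of $\lambda$ along a club; the combinatorial bookkeeping is that of \cite[Claim~2.5]{sh:922}. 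For (3), with $\lambda\ge\beth_\omega$, I would first invoke Shelah's Revised GCH to secure a size-$\lambda$ covering family for $[\lambda]^{<\lambda}$ with the appropriate pcf-theoretic structure, and then construct the $\mathcal{P}_\eta$'s locally from this family as in \cite[Claim~3.2]{sh:460}. The main obstacle is precisely this bridge from the purely cardinal-arithmetic statement $\lambda^{<\lambda}=\lambda$ to the stationary guessing principle $\dl_\lambda$; absent that pcf-theoretic input, the converse cannot be reproduced in a self-contained way in this section, which is why it is cited as a fact rather than proved.
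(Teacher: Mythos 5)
The paper states this result as a \emph{Fact} and gives no proof at all --- it is cited verbatim from Shelah's papers --- so there is no in-paper argument to compare yours against. What you do prove is correct: taking $\mathcal P_\eta:=\mathcal P(\eta)$ under strong inaccessibility gives (1) outright; a $\diamondsuit_\lambda$-sequence repackaged as singletons gives the first half of (2), and a $\diamondsuit^*_\lambda$-sequence (which is already a sequence of small families, so no repackaging is even needed) gives the second half; and your counting argument for $\dl_\lambda\Rightarrow\lambda^{<\lambda}=\lambda$ is sound, since every $X\in[\lambda]^{<\lambda}$ is bounded by regularity and is therefore guessed verbatim on a tail, whence $[\lambda]^{<\lambda}\s\bigcup_{\eta<\lambda}\mathcal P_\eta$, a union of $\lambda$ many sets each of size $<\lambda$. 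For the substantive converses in (3) and (4) you correctly identify the required inputs (the Revised GCH machinery of \cite{sh:460} for $\lambda\ge\beth_\omega$, and the successor-of-an-uncountable argument of \cite{sh:922}, which is of a piece with Shelah's theorem that $2^\mu=\mu^+$ yields diamond for uncountable $\mu$) but only sketch them; since the paper itself outsources exactly these directions to the same citations, this is an appropriate level of detail rather than a gap.
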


\begin{lemma}\label{lemma26} Suppose that $\lambda$ is a regular uncountable cardinal and $\dl^*_\lambda$ holds.
Then there exists a strongly unbounded coloring $c:\lambda\times\kappa\rightarrow\lambda$,
for $\kappa:=\mathfrak b_\lambda$.
\end{lemma}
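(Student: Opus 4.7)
The plan is to take $\kappa := \mathfrak b_\lambda$ and use the eventually-dominating structure of ${}^\lambda\lambda$ as the source of unboundedness, together with $\dl^*_\lambda$ as the source of ``guessing'' for strong unboundedness. First I would fix a $<^*$-increasing $<^*$-unbounded sequence $\langle g_\beta\mid\beta<\kappa\rangle$ of elements of ${}^\lambda\lambda$, which exists because $\mathfrak b_\lambda$ is regular; the point of insisting on $<^*$-increasing is that then every cofinal $B\s\kappa$ still yields a $<^*$-unbounded subfamily $\{g_\beta\mid\beta\in B\}$. Then, by the standard coding of functions $\lambda\to\lambda$ as subsets of $\lambda$, reinterpret the $\dl^*_\lambda$-sequence as a sequence $\langle \mathcal P_\eta\mid\eta<\lambda\rangle$ with $\mathcal P_\eta\s{}^\eta\lambda$, $|\mathcal P_\eta|<\lambda$, and the property that for every $g\in{}^\lambda\lambda$ the set $\{\eta<\lambda\mid g\restriction\eta\in\mathcal P_\eta\}$ contains a club in $\lambda$.

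Define $c:\lambda\times\kappa\to\lambda$ by $c(\eta,\beta):=g_\beta(\eta)$, so that $c_\beta=g_\beta$ for each $\beta<\kappa$. To verify strong unboundedness, fix a cofinal $B\s\kappa$ and suppose, toward a contradiction, that for every $\eta<\lambda$ and every $t:\eta\to\lambda$ the set $\{g_\beta(\eta)\mid\beta\in B,\ t\s g_\beta\}$ is bounded in $\lambda$. Define $h:\lambda\to\lambda$ by
\[
h(\eta):=\sup\bigl\{\sup\{g_\beta(\eta)\mid\beta\in B,\ t\s g_\beta\}+1\mid t\in\mathcal P_\eta\bigr\}.
\]
Each inner supremum is below $\lambda$ by assumption, and since $|\mathcal P_\eta|<\lambda=\cf(\lambda)$, we get $h(\eta)<\lambda$, so $h$ is well-defined. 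By $<^*$-unboundedness of $\{g_\beta\mid\beta\in B\}$, choose $\beta\in B$ such that $E:=\{\eta<\lambda\mid g_\beta(\eta)\ge h(\eta)\}$ is unbounded in $\lambda$. By the $\dl^*_\lambda$ property applied to $g_\beta$, the set $C:=\{\eta<\lambda\mid g_\beta\restriction\eta\in\mathcal P_\eta\}$ contains a club, hence meets $E$; pick $\eta\in E\cap C$ and set $t:=g_\beta\restriction\eta\in\mathcal P_\eta$. Then $\beta\in B$ and $t\s g_\beta$, so $g_\beta(\eta)$ is one of the values sup'd over in the inner supremum at $t$, yielding $g_\beta(\eta)<h(\eta)$, which contradicts $\eta\in E$.

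I expect the main obstacle to be entirely bookkeeping rather than mathematical content: namely, converting the set-theoretic form of $\dl^*_\lambda$ (guessing $A\cap\eta$) into the functional form used above (guessing $g\restriction\eta$), and verifying that $\mathfrak b_\lambda$ is regular and carries a $<^*$-increasing unbounded chain so that cofinality in $\kappa$ is preserved under $<^*$-unboundedness. Both steps are standard, but deserve to be cited or verified carefully; with them in hand, the argument is exactly the pigeonhole above.
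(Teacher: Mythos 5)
Your overall strategy is the same as the paper's (diagonalize a scale of length $\mathfrak b_\lambda$ against an oracle built from the $\dl^*_\lambda$-sequence), but there is a genuine gap at the step ``$C$ contains a club, hence meets $E$.'' You only know that $E:=\{\eta<\lambda\mid g_\beta(\eta)\ge h(\eta)\}$ is \emph{unbounded} in $\lambda$, since all that $<^*$-unboundedness of $\{g_\beta\mid\beta\in B\}$ gives you is some $\beta$ with $g_\beta\not<^*h$. An unbounded subset of $\lambda$ need not meet a club (the successor ordinals are unbounded and disjoint from the club of limit ordinals), so you cannot conclude $E\cap C\neq\emptyset$. For the argument to close, $E$ must be \emph{stationary}.

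The repair is not mere bookkeeping. One has to work from the start with the club-domination order $f<_{cl}g$ iff $\{\alpha<\lambda\mid f(\alpha)<g(\alpha)\}$ contains a club, and use the theorem of Cummings and Shelah that $\mathfrak b_\lambda$ coincides with the least size of a family of functions in ${}^\lambda\lambda$ that is unbounded with respect to $<_{cl}$. This is exactly what the paper does: it fixes a $<_{cl}$-increasing, $<_{cl}$-unbounded sequence $\langle f_\beta\mid\beta<\kappa\rangle$, which guarantees that for every oracle $g$ and a tail of $\beta$, the set $\{\eta<\lambda\mid g(\eta)\le f_\beta(\eta)\}$ is stationary, and a stationary set does meet the club coming from $\dl^*_\lambda$. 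Note that the equality of $\mathfrak b_\lambda$ with its $<_{cl}$-analogue is a nontrivial theorem (a $<^*$-unbounded family is not automatically $<_{cl}$-unbounded, since $<^*$ is the stronger relation), so it must be cited rather than absorbed into ``standard'' preliminaries. The remainder of your argument --- the construction of $h$ from $\mathcal P_\eta$, the well-definedness of $h(\eta)$ via $|\mathcal P_\eta|<\lambda=\cf(\lambda)$, and the final pigeonhole contradiction --- matches the paper and is correct once $E$ is known to be stationary.
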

\begin{proof} We commence with verifying the following variation of weak diamond.
\begin{claim} For every function $F:{}^{<\lambda}\lambda\rightarrow\lambda$,
there exists a function $g:\lambda\rightarrow\lambda$ with the property that for every function $f:\lambda\rightarrow\lambda$,
the following set covers a club:
$$\{\eta<\lambda\mid F(f\restriction\eta)\le g(\eta)\}.$$
\end{claim}
\begin{proof}
Using $\dl^*_\lambda$, we may fix a sequence $\langle \mathcal F_\eta\mid\eta<\lambda\rangle$ such that:
\begin{itemize}
\item  for every $\eta<\lambda$, $\mathcal F_\eta\s{}^\eta\eta$ and $|\mathcal F_\eta|<\lambda$;
\item for every function $f:\lambda\rightarrow\lambda$, for club many $\eta<\lambda$,
$f\restriction\eta\in\mathcal F_\eta$. 
\end{itemize}
Now, given any function  $F:{}^{<\lambda}\lambda\rightarrow\lambda$,
define an oracle function $g:\lambda\rightarrow\lambda$ via
$$g(\eta):=\sup\{ F(t)\mid t\in\mathcal F_\eta\}.$$
Next, given any function $f:\lambda\rightarrow\lambda$, the set $C:=\{ \eta<\lambda\mid f\restriction\eta\in\mathcal F_\eta\}$ covers a club,
and it is clear that, for every $\eta\in C$, $g(\eta)\ge F( f\restriction\eta)$.
\end{proof}

For functions $f,g\in{}^\lambda\lambda$, let $f<_{cl}g$ iff $\{\alpha<\lambda\mid f(\alpha)<g(\alpha)\}$ covers a club.
By \cite[Theorem~6]{CuSh:541}, $\mathfrak b_\lambda$ coincides with the least size of a family of functions from $\lambda$ to $\lambda$
that is not bounded with respect to $<_{cl}$. It follows that we may construct a $<_{cl}$-increasing sequence of functions $\langle f_\beta\mid\beta<\kappa\rangle$
for which $\{ f_\beta\mid \beta<\kappa\}$ is not bounded with respect to $<_{cl}$.
Define $c:\lambda\times\kappa\rightarrow\lambda$ via $c(\eta,\beta):=f_\beta(\eta)$.
Then, for every function $g:\lambda\rightarrow\lambda$,
for a tail of $\beta<\kappa$, $S_\beta(g):=\{\eta<\lambda\mid g(\eta)\le c_\beta(\eta)\}$ is stationary.
We claim that $c$ is strongly unbounded.
Towards a contradiction, suppose that this is not the case, as witnessed by a cofinal set $B$.
Then, we may define a function $F:{}^{<\lambda}\lambda\rightarrow\lambda$ by letting for all $\eta<\lambda$ and $t:\eta\rightarrow\lambda$,
$$F(t):=\sup\{ c_\beta(\eta)\mid \beta\in B, t\s c_\beta\}+1.$$
Now, pick a corresponding oracle $g:\lambda\rightarrow\lambda$ such that for every function $f:\lambda\rightarrow\lambda$,
the following set covers a club:
$$C_f:=\{\eta<\lambda\mid F(f\restriction\eta)\le g(\eta)\}.$$
Pick $\beta\in B$ such that $S_\beta(g)$ is stationary.
Then, find $\eta\in S_\beta(g)\cap C_{c_\beta}$. 
Altogether, $c_\beta(\eta)<F(c_\beta\restriction\eta)\le g(\eta)\le c_\beta(\eta)$.
This is a contradiction.
\end{proof}

\begin{lemma}\label{lemma27} Suppose that $\lambda$ is a regular uncountable cardinal and $\dl_\lambda$ holds.

If $\kappa=\mathfrak b_\lambda=\mathfrak d_\lambda$,
then there exists a strongly unbounded coloring $c:\lambda\times\kappa\rightarrow\lambda$.
\end{lemma}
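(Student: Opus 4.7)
The plan is to adapt the argument for Lemma~\ref{lemma26}. There, $\dl^*_\lambda$ yielded a weak-diamond oracle that guessed on a club; here, $\dl_\lambda$ will only give guessing on a stationary set, so the extra hypothesis $\kappa=\mathfrak b_\lambda=\mathfrak d_\lambda$ must be used to promote the intersection back to something nonempty, by ensuring that one of the two crucial sets in fact contains a club.

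I would first prove the stationary analogue of the claim in Lemma~\ref{lemma26}: for every $F:{}^{<\lambda}\lambda\rightarrow\lambda$ there exists $g:\lambda\rightarrow\lambda$ such that for every $f:\lambda\rightarrow\lambda$, the set $\{\eta<\lambda\mid F(f\restriction\eta)\le g(\eta)\}$ is stationary. The proof mirrors that of Lemma~\ref{lemma26} verbatim, using $\dl_\lambda$ to fix predictors $\langle\mathcal F_\eta\mid\eta<\lambda\rangle$ with $\mathcal F_\eta\s{}^\eta\eta$, $|\mathcal F_\eta|<\lambda$, and $f\restriction\eta\in\mathcal F_\eta$ for stationarily many $\eta$, and then defining $g(\eta):=\sup\{F(t)\mid t\in\mathcal F_\eta\}$; the change from club guessing to stationary guessing propagates through without modification. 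Next, using $\mathfrak b_\lambda=\mathfrak d_\lambda=\kappa$, I would fix a scale $\langle f_\beta\mid\beta<\kappa\rangle$ of length $\kappa$, i.e.\ a $<_{cl}$-increasing sequence in ${}^\lambda\lambda$ whose range is $\le_{cl}$-dominating. Such a scale exists by a routine recursion: at stage $\beta<\kappa$, the set $\{f_\gamma\mid\gamma<\beta\}$ together with the $\beta$-th member of a fixed dominating family of size $\mathfrak d_\lambda=\kappa$ has size less than $\mathfrak b_\lambda$, hence admits a $<_{cl}$-upper bound. Define $c:\lambda\times\kappa\rightarrow\lambda$ by $c(\eta,\beta):=f_\beta(\eta)$. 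Then, crucially, for every $g:\lambda\rightarrow\lambda$, on a tail of $\beta<\kappa$ we have $g\le_{cl}f_\beta$, so $S_\beta(g):=\{\eta<\lambda\mid g(\eta)\le c_\beta(\eta)\}$ \emph{contains a club}.

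Finally, to see that $c$ is strongly unbounded, I would argue as in Lemma~\ref{lemma26}: assuming failure with cofinal witness $B\s\kappa$, define $F(t):=\sup\{c_\beta(\eta)\mid\beta\in B,\ t\s c_\beta\}+1$ for each $t:\eta\rightarrow\lambda$, obtain the oracle $g$ from the first step, and pick $\beta\in B$ large enough that $S_\beta(g)$ contains a club $D$. The stationary set $C_{c_\beta}:=\{\eta<\lambda\mid F(c_\beta\restriction\eta)\le g(\eta)\}$ meets $D$, yielding an $\eta$ for which $c_\beta(\eta)<F(c_\beta\restriction\eta)\le g(\eta)\le c_\beta(\eta)$, a contradiction. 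The principal obstacle, the weakening from club to stationary in the diamond-like guessing, is precisely what the scale overcomes: by arranging $S_\beta(g)$ to contain a club rather than merely be stationary, a stationary--club intersection suffices.
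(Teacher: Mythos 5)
Your proposal is correct and takes essentially the same route as the paper's proof: a stationary-guessing oracle extracted from $\dl_\lambda$, combined with a $<_{cl}$-increasing dominating sequence of length $\kappa=\mathfrak b_\lambda=\mathfrak d_\lambda$ so that each set $\{\eta<\lambda\mid g(\eta)\le c_\beta(\eta)\}$ contains a club for a tail of $\beta$, followed by the identical club-meets-stationary contradiction. The only difference is cosmetic: you build the scale explicitly, whereas the paper invokes it via a citation.
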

\begin{proof} As $\kappa=\mathfrak b_\lambda=\mathfrak d_\lambda$,
it is possible to construct a coloring $c:\lambda\times\kappa\rightarrow\lambda$ with the property that for every function $g:\lambda\rightarrow\lambda$,
for a tail of $\beta<\kappa$, $$C_\beta(g):=\{\eta<\lambda\mid g(\eta)\le c_\beta(\eta)\}$$ is co-bounded in $\kappa$.\footnote{See \cite[\S6]{paper53}: for $\lambda$ regular, $\mathfrak b_\lambda=\mathfrak d_\lambda=\kappa$ implies that 
$\ubd([\lambda]^\lambda,J^{\bd}[\kappa],\lambda)$ holds.}
We claim that $c$ is strongly unbounded.
Towards a contradiction, suppose that this is not the case, as witnessed by a cofinal set $B$.
Then, we may define a function $F:{}^{<\lambda}\lambda\rightarrow\lambda$ by letting for all $\eta<\lambda$ and $t:\eta\rightarrow\lambda$,
$$F(t):=\sup\{ c_\beta(\eta)\mid \beta\in B, t\s c_\beta\}+1.$$

As $\dl_\lambda$ holds, we may pick a corresponding oracle $g:\lambda\rightarrow\lambda$ such that for every function $f:\lambda\rightarrow\lambda$,
the following set is stationary:
$$S_f:=\{\eta<\lambda\mid F(f\restriction\eta)\le g(\eta)\}.$$
Pick $\beta\in B$ such that $C_\beta(g)$ covers a club.
Then, find $\eta\in C_\beta(g)\cap S_{c_\beta}$. 
Altogether, $c_\beta(\eta)<F(c_\beta\restriction\eta)\le g(\eta)\le c_\beta(\eta)$.
This is a contradiction.
\end{proof}

\subsection{Tightness}
\begin{lemma} Suppose that $\mathfrak d=\mathfrak c=\kappa$.
Then there exists a tight strongly unbounded coloring $c:\omega\times\kappa\rightarrow\omega$.
\end{lemma}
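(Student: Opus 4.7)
The plan is to observe that the tightness condition $\cf(\mathcal T_c,\supseteq)\le\kappa$ is automatic from $\mathfrak c=\kappa$, and then to build a strongly unbounded coloring by a straightforward recursion of length $\kappa$, using only the easy fact that the pointwise dominating number of ${}^\omega\omega$ coincides with $\mathfrak d$.

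First, tightness comes for free: every $\mathbb T\in\mathcal T_c$ is a subset of the countable set ${}^{<\omega}\omega$, so $|\mathcal T_c|\le 2^{\aleph_0}=\mathfrak c=\kappa$, and hence $\mathcal T_c$ itself is cofinal in $(\mathcal T_c,\supseteq)$ and of size $\le\kappa$, regardless of which coloring we pick. Thus any coloring we build will automatically be tight.

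For the strongly unbounded part, by Lemma~\ref{lemma23} it suffices to produce an \emph{unbounded} coloring $c:\omega\times\kappa\rightarrow\omega$. Using $\mathfrak c=\kappa$, I would fix an enumeration ${}^\omega\omega=\{g_\alpha\mid\alpha<\kappa\}$, and then define $c(\eta,\beta):=f_\beta(\eta)$, where the sequence $\langle f_\beta\mid\beta<\kappa\rangle$ is built recursively so that at stage $\beta$, $f_\beta$ is any element of ${}^\omega\omega$ satisfying $f_\beta\not\le g_\alpha$ pointwise for every $\alpha<\beta$. The key sub-observation ensuring such $f_\beta$ exists is that $\mathfrak d$ is also the least cardinality of a \emph{pointwise} dominating subset of ${}^\omega\omega$: a pointwise dominating family is trivially $\le^*$-dominating, and conversely, given any $\le^*$-dominating family $\{d_\alpha\mid\alpha<\mathfrak d\}$, the family $\{e_{\alpha,k}\mid\alpha<\mathfrak d,\,k<\omega\}$ defined by $e_{\alpha,k}(n):=\max(d_\alpha(n),k)$ is pointwise dominating and of size $\mathfrak d$. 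Consequently, since $|\beta|<\mathfrak d$, the family $\{g_\alpha\mid\alpha<\beta\}$ is not pointwise dominating, so $\bigcup_{\alpha<\beta}\{f\in{}^\omega\omega\mid f\le g_\alpha\text{ pointwise}\}$ is a proper subset of ${}^\omega\omega$, and any element of the complement may serve as $f_\beta$.

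To verify that $c$ is unbounded, fix a cofinal $B\s\kappa$. If for each $\eta<\omega$ the set $\{c_\beta(\eta)\mid\beta\in B\}$ were bounded by some $N_\eta<\omega$, then the function $g(\eta):=N_\eta$ would pointwise bound all $f_\beta$ for $\beta\in B$; writing $g=g_{\alpha_0}$ and choosing $\beta\in B$ with $\beta>\alpha_0$ would then contradict $f_\beta\not\le g_{\alpha_0}$ pointwise. Hence some $\eta$ witnesses $\sup\{c_\beta(\eta)\mid\beta\in B\}=\omega$, as required. I expect no real obstacle: the only non-trivial ingredient is the equality of the pointwise and eventual dominating numbers, and the rest is a routine recursion using $|\beta|<\kappa$ at each stage.
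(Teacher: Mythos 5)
Your proposal is correct and follows the same route as the paper: tightness is automatic from $|\mathcal T_c|\le|\mathcal P({}^{<\omega}\omega)|=\mathfrak c=\kappa$, and Lemma~\ref{lemma23} reduces the problem to producing an unbounded coloring, which the paper obtains from $\mathfrak d=\kappa$ by citing \cite[\S6]{paper53}. The only difference is that you spell out that last construction explicitly (diagonalizing against an enumeration of ${}^\omega\omega$ using the equality of the pointwise and eventual dominating numbers), and your argument there is sound.
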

\begin{proof} It is easy to construct an unbounded coloring $c:\omega\times \mathfrak d\rightarrow\omega$ (see \cite[\S6]{paper53}).
By Lemma~\ref{lemma23}, $c$ is moreover strongly unbounded.
As $\mathcal T_c\s\mathcal P({}^{<\omega}\omega)$, it follows that $|T_c|\le\mathfrak c$.
So, if $\mathfrak d=\mathfrak c=\kappa$, then $c$ is tight.
\end{proof}

\begin{cor} Suppose that $\lambda=\lambda^{<\lambda}$ is an infinite cardinal satisfying any of the following:
\begin{itemize}
\item $\lambda=\aleph_0$, or
\item $\lambda=\aleph_1$ and $\diamondsuit_\lambda$ holds, or
\item $\lambda>\aleph_1$ is a successor cardinal, or
\item $\lambda\ge\beth_\omega$, or
\item $\lambda$ is strongly inaccessible.
\end{itemize}

If $\kappa=\mathfrak b_\lambda=2^\lambda$,
then there exists a tight strongly unbounded coloring $c:\lambda\times\kappa\rightarrow\lambda$.
\end{cor}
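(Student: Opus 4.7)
The plan is to handle the countable case via the immediately preceding lemma, and to reduce every uncountable case to Lemma~\ref{lemma27} after extracting the appropriate instance of Shelah's principle $\dl_\lambda$ from Fact~\ref{dlfacts}; tightness will then come essentially for free from the cardinal arithmetic hypothesis $\lambda=\lambda^{<\lambda}$.

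For $\lambda=\aleph_0$, the hypothesis $\kappa=\mathfrak b_\lambda=2^\lambda$ reads $\kappa=\mathfrak b=\mathfrak c$, which together with $\mathfrak b\le\mathfrak d\le\mathfrak c$ forces $\mathfrak d=\mathfrak c=\kappa$; the preceding lemma then directly yields a tight strongly unbounded coloring $c:\omega\times\kappa\rightarrow\omega$, completing this case.

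For uncountable $\lambda$, I would first run through the remaining bullets to secure $\dl_\lambda$ using Fact~\ref{dlfacts}: clause~(2) handles $\lambda=\aleph_1$ under $\diamondsuit_\lambda$; clause~(4) handles a successor $\lambda>\aleph_1$ via $\lambda=\lambda^{<\lambda}$; clause~(3) handles $\lambda\ge\beth_\omega$ via $\lambda=\lambda^{<\lambda}$; and clause~(1) handles the strongly inaccessible case, where one even gets the stronger $\dl^*_\lambda$. In every such subcase the chain $\mathfrak b_\lambda\le\mathfrak d_\lambda\le 2^\lambda$ combined with $\kappa=\mathfrak b_\lambda=2^\lambda$ forces $\mathfrak d_\lambda=\kappa$, so the hypotheses of Lemma~\ref{lemma27} are met and it produces a strongly unbounded coloring $c:\lambda\times\kappa\rightarrow\lambda$.

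It remains to upgrade $c$ to a tight coloring in the uncountable cases. Since $\mathcal T_c\s\mathcal P({}^{<\lambda}\lambda)$ and the assumption $\lambda=\lambda^{<\lambda}$ gives $|{}^{<\lambda}\lambda|=\lambda$, one concludes $|\mathcal T_c|\le 2^\lambda=\kappa$ and a fortiori $\cf(\mathcal T_c,{\supseteq})\le\kappa$, so $c$ is tight. There is no real obstacle here: the argument is a bookkeeping exercise that routes each bullet through the appropriate clause of Fact~\ref{dlfacts} and then invokes either the preceding lemma or Lemma~\ref{lemma27}, with tightness dropping out of the cardinal arithmetic.
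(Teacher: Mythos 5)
Your proof is correct and follows essentially the same route as the paper: tightness comes from $|\mathcal T_c|\le 2^{\lambda^{<\lambda}}=2^\lambda=\kappa$, the uncountable bullets are fed into Lemma~\ref{lemma27} after extracting $\dl_\lambda$ from the appropriate clause of Fact~\ref{dlfacts}, and $\mathfrak d_\lambda=\kappa$ is squeezed out of $\mathfrak b_\lambda\le\mathfrak d_\lambda\le 2^\lambda$. The only (harmless) deviation is that the paper dispatches the first and last bullets via Lemma~\ref{lemma23} (unbounded implies strongly unbounded when $\lambda=\aleph_0$ or $\lambda$ is strongly inaccessible), whereas you route $\lambda=\aleph_0$ through the preceding lemma and the inaccessible case through $\dl^*_\lambda\Rightarrow\dl_\lambda$ and Lemma~\ref{lemma27}; both work.
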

\begin{proof} As in the proof of the previous lemma, the fact that $\kappa=2^{\lambda^{<\lambda}}$ 
implies that any strongly unbounded coloring $c:\lambda\times\kappa\rightarrow\lambda$ is tight.
Assuming $\kappa=\mathfrak b_\lambda$, it is also easy to obtain an unbounded coloring $c:\lambda\times\kappa\rightarrow\lambda$.
So the heart of the matter is to get a strongly unbounded one.
Lemma~\ref{lemma23} takes care of the first and last bullet.
The remaining bullets follow from Lemma~\ref{lemma27} together with Fact~\ref{dlfacts}.
\end{proof}

\begin{defn} A \emph{$\kappa$-Luzin subset of ${}^\lambda\lambda$}
is a subset $L\s{}^\lambda\lambda$ of size $\kappa$ having the property that for every $B\in[L]^\kappa$,
there exists $t\in{}^{<\lambda}\lambda$ such that, for every $t'\in{}^{<\lambda}\lambda$ extending $t$,
there exists an element of $B$ extending $t'$.
\end{defn}

It is well-known that $\ma$ implies the existence of a  $\mathfrak c$-Luzin subset of ${}^\omega\omega$. More generally, $\cov(\mathcal M)=\cof(\mathcal M)=\kappa$ entails the existence of a $\kappa$-Luzin subset of ${}^\omega\omega$.
Also, the following fact is standard:

\begin{fact}[Luzin] For every infinite cardinal $\lambda=\lambda^{<\lambda}$,
if $2^\lambda=\lambda^+$, then there exists a $\lambda^+$-Luzin subset of ${}^\lambda\lambda$.
\end{fact}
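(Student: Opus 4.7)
The plan is to adapt the classical Luzin construction to the topology on ${}^\lambda\lambda$ whose basic open sets are $[t]:=\{f\in {}^\lambda\lambda\mid t\subseteq f\}$ for $t\in {}^{<\lambda}\lambda$. In this language, the $\lambda^+$-Luzin property asks exactly that no $B\in[L]^{\lambda^+}$ be nowhere dense in ${}^\lambda\lambda$; equivalently, that $L$ meet every closed nowhere dense subset of ${}^\lambda\lambda$ in at most $\lambda$ points. So the approach is to construct $L$ by a diagonalization of length $\lambda^+$ that lists and avoids all closed nowhere dense sets.

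Two preliminary facts will carry the construction. First, $\lambda=\lambda^{<\lambda}$ forces $\lambda$ to be regular, and gives $|{}^{<\lambda}\lambda|=\lambda$. Since each closed subset of ${}^\lambda\lambda$ is determined by a pruned subtree of ${}^{<\lambda}\lambda$, the hypothesis $2^\lambda=\lambda^+$ bounds the number of closed subsets of ${}^\lambda\lambda$ by $\lambda^+$. Second, ${}^\lambda\lambda$ is $\lambda$-Baire: the intersection of any $\lambda$-many dense open sets is dense. I would prove this by a fusion of length $\lambda$, building an increasing chain $\langle t_\alpha\mid\alpha<\lambda\rangle$ in ${}^{<\lambda}\lambda$ refining a prescribed basic open set while diving successively into each dense open set, with the regularity of $\lambda$ ensuring that at each limit $\alpha<\lambda$ the union $\bigcup_{\beta<\alpha}t_\beta$ remains in ${}^{<\lambda}\lambda$.

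With those in hand, I would enumerate the closed nowhere dense subsets of ${}^\lambda\lambda$ as $\langle F_\alpha\mid\alpha<\lambda^+\rangle$ and recursively choose
\[
f_\alpha\in {}^\lambda\lambda\setminus\left(\bigcup_{\beta\le\alpha}F_\beta\cup\{f_\beta\mid\beta<\alpha\}\right),
\]
which is possible because each singleton is nowhere dense, the set to avoid is therefore a union of at most $\lambda$ closed nowhere dense sets, and its complement is nonempty by $\lambda$-Baireness. Setting $L:=\{f_\alpha\mid\alpha<\lambda^+\}$, I would verify the Luzin property for a given $B\in[L]^{\lambda^+}$ as follows. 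If $B$ were nowhere dense, then so would be its closure $\overline B$, which by the enumeration equals some $F_\gamma$; but the construction forces $B\cap F_\gamma\subseteq\{f_\beta\mid\beta<\gamma\}$, contradicting $|B|=\lambda^+$. Hence $B$ is not nowhere dense, and any $t\in {}^{<\lambda}\lambda$ with $[t]\subseteq\overline B$ works: for every $t'\supseteq t$ in ${}^{<\lambda}\lambda$, the nonempty open set $[t']$ lies in $\overline B$ and so must meet the dense subset $B$, supplying an element of $B$ extending $t'$.

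The main technical point is the $\lambda$-Baire fusion together with the cardinal bookkeeping that $\lambda^{<\lambda}=\lambda$ enables; once those are set up, the diagonalization itself is a routine transfinite recursion.
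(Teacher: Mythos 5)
Your proof is correct and is precisely the classical Luzin diagonalization that the paper invokes without proof when it calls this fact ``standard'': under $\lambda=\lambda^{<\lambda}$ the space ${}^\lambda\lambda$ is $\lambda$-Baire and has only $2^\lambda=\lambda^+$ closed (hence closed nowhere dense) sets, so one can diagonalize against them in $\lambda^+$ steps. All the supporting points check out, including the regularity of $\lambda$, the identification of closed sets with subtrees of ${}^{<\lambda}\lambda$, and the final translation from ``$B$ is somewhere dense'' back to the paper's formulation of the Luzin property.
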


\begin{lemma} Suppose that there exists a $\kappa$-Luzin subset of ${}^{\lambda}\lambda$, with $\kappa$ regular.

If $\lambda^{<\lambda}<\kappa$, then there exists a tight strongly unbounded coloring $c:\lambda\times\kappa\rightarrow\lambda$.
\end{lemma}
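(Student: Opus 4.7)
The plan is to fix any enumeration of the given $\kappa$-Luzin set as $L=\{f_\beta\mid\beta<\kappa\}$ and define $c(\eta,\beta):=f_\beta(\eta)$. Writing $[t]:=\{f\in{}^\lambda\lambda\mid t\s f\}$ and $B_t:=\{\beta<\kappa\mid t\s f_\beta\}$, we have $|B_t|=|L\cap[t]|$; thus, since $\kappa$ is regular, $B_t$ is cofinal in $\kappa$ if and only if $|L\cap[t]|=\kappa$.

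For strong unboundedness, given a cofinal $B\s\kappa$, the set $L_B:=\{f_\beta\mid\beta\in B\}$ is a $\kappa$-Luzin subset of $L$, so there is a Luzin stem $t_0\in{}^{<\lambda}\lambda$. Setting $\eta:=\dom(t_0)$, for each $\xi<\lambda$ the extension $t_0\cup\{(\eta,\xi)\}$ is realized by some $f_\beta$ with $\beta\in B$, giving $c_\beta(\eta)=\xi$ and $t_0\s c_\beta$; hence $\sup\{c_\beta(\eta)\mid\beta\in B,\,t_0\s c_\beta\}=\lambda$.

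For tightness, put $T:=\{t\in{}^{<\lambda}\lambda\mid|L\cap[t]|=\kappa\}$ and, for $t\in T$, let $\mathbb{K}_t:=\{t'\in{}^{<\lambda}\lambda\mid t'\s t\text{ or }t\s t'\}$. Note $[\mathbb{K}_t]_c=B_t$ is cofinal by the above, so $\mathbb{K}_t\in\mathcal{T}_c$, and the family $\mathcal{F}:=\{\mathbb{K}_t\mid t\in T\}\s\mathcal{T}_c$ has $|\mathcal{F}|\le\lambda^{<\lambda}<\kappa$. To see $\mathcal{F}$ is cofinal in $(\mathcal{T}_c,\supseteq)$, fix $\mathbb{T}\in\mathcal{T}_c$ with $X:=[\mathbb{T}]_c$, and recursively build $\langle(B_i,s_i)\rangle$ by putting $B_0:=L_X$, picking (at stage $i$, while $|B_i|=\kappa$) a Luzin stem $s_i$ for $B_i$, halting if $|B_i\cap[s_i]|=\kappa$, and otherwise setting $B_{i+1}:=B_i\setminus[s_i]$. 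A disjointness argument shows each new $s_i$ is incomparable with every earlier $s_j$ (else some element of $B_i\s B_{j+1}$ would lie in $[s_j]$), so the cones $[s_j]$ are pairwise disjoint and $|L_X\setminus B_i|=\sum_{j<i}|B_j\cap[s_j]|<\kappa$ by regularity of $\kappa$, which propagates $|B_i|=\kappa$ through limit stages. Since antichains in ${}^{<\lambda}\lambda$ have cardinality $\le\lambda^{<\lambda}$, the iteration must halt before stage $(\lambda^{<\lambda})^+\le\kappa$ with some $s^*\in T$ that is simultaneously a Luzin stem for $L_X$; tracing initial segments of the appropriate $f_\beta$ with $\beta\in X$ then verifies $\mathbb{K}_{s^*}\s\mathbb{T}$.

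The main obstacle is the termination of this iteration in its success mode $|B_i\cap[s_i]|=\kappa$: one must combine the incomparability of distinct stems (so that they form an antichain in ${}^{<\lambda}\lambda$), the regularity-based sum bound (keeping $|B_i|=\kappa$ alive through limit stages), and the antichain cardinality bound $\lambda^{<\lambda}<\kappa$ to rule out a run of $(\lambda^{<\lambda})^+$ many continuation steps.
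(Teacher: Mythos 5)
Your coloring, your verification of strong unboundedness, and your choice of dense family $\{\mathbb{K}_t\mid t\in T\}$ (the paper's $\mathbb{T}_s$) all coincide with the paper's proof; the only genuine divergence is in how you locate, for a given $\mathbb{T}\in\mathcal{T}_c$, a stem $s\in T$ with $\mathbb{K}_{s}\subseteq\mathbb{T}$. The paper does this in one step by pruning the index set \emph{first}: for each $t$ it sets $A_t:=\{\beta\in[\mathbb{T}]_c\mid t\subseteq c_\beta\}$, notes that $N:=\bigcup\{A_t\mid |A_t|<\kappa\}$ has size $<\kappa$ (there are only $\lambda^{<\lambda}<\kappa$ many $t$'s and $\kappa$ is regular), and then applies the Luzin property once to $[\mathbb{T}]_c\setminus N$; the resulting stem $s$ automatically has $|A_s|=\kappa$, since $s$ sits inside some $c_\beta$ with $\beta\notin N$. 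Your transfinite exhaustion --- repeatedly extracting stems, discarding thin cones, and bounding the length of the run by the incomparability of successive stems together with the antichain bound $\lambda^{<\lambda}$ and regularity of $\kappa$ --- is correct and uses exactly the same two hypotheses, but it pays for doing the pruning \emph{after} invoking Luzin with a recursion through limit stages where a single application suffices. Two small points to tighten: the enumeration of $L$ should be taken injective (otherwise, for a cofinal $B\subseteq\kappa$ the set $L_B$ need not have size $\kappa$, and your claimed equivalence between ``$B_t$ cofinal'' and ``$|L\cap[t]|=\kappa$'' can fail in one direction); and the final inclusion $\mathbb{K}_{s^*}\subseteq\mathbb{T}$ deserves its one-line justification (nodes below $s^*$ are initial segments of any $c_\beta$ with $\beta\in X$ and $s^*\subseteq c_\beta$, while nodes above $s^*$ are realized as initial segments of elements of $B_i\subseteq L_X$ by the stem property), though this is routine and identical to the paper's.
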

\begin{proof} Fix an injective enumeration $\vec g=\langle g_\beta\mid \beta<\kappa\rangle$ of a $\kappa$-Luzin subset of ${}^\lambda\lambda$.
Let $c:\lambda\times\kappa\rightarrow\lambda$ denote the unique coloring such that $c_\beta=g_\beta$ for all $\beta<\kappa$.
\begin{claim} $c$ is strongly unbounded.
\end{claim} 
\begin{proof} Let $B\in [\kappa]^\kappa$; we need to find 
$\eta<\lambda$ and a map $t:\eta\rightarrow\lambda$ such that 
$\sup\{ c_\beta(\eta)\mid \beta\in B\  \&\ t\s c_\beta\}=\lambda$.
As $\im(\vec g)$ is a $\kappa$-Luzin subset of ${}^\lambda\lambda$, 
fix some $t\in {}^{<\lambda}\lambda$ such that, for every $t'\in{}^{<\lambda}\lambda$ extending $t$,
there exists $l\in \{g_\beta \mid \beta \in B\}$ extending $t'$. Set $\eta:=\dom(t)$.
Then for every $\gamma<\lambda$, we can find $\beta \in B$ such that $g_\beta$ extends $t{}^\smallfrown\langle\gamma\rangle$.
Altogether, $\{ c_\beta(\eta)\mid \beta\in B\  \&\ t\s c_\beta\}=\lambda$.	
\end{proof}
For every $s\in{}^{<\lambda}\lambda$, denote $\mathbb T_s:=\{ t\in{}^{<\lambda}\lambda\mid s\s t\text{ or }t\s s\}$.
\begin{claim} For every $\mathbb T\in\mathcal T_c$,
there exists  $s\in{}^{<\lambda}\lambda$ such that $\mathbb T\supseteq\mathbb T_s\in\mathcal T_c$.
\end{claim} 
\begin{proof} Let $\mathbb T\in\mathcal T_c$,
so that $[\mathbb T]_c=\{ \beta<\kappa\mid \forall\eta<\lambda\,(c_\beta\restriction\eta\in\mathbb  T)\}$
is cofinal in $\kappa$. For every $t\in{}^{<\lambda}\lambda$, write $A_t:=\{ \beta\in [\mathbb T]_c\mid t\s c_\beta\}$.
As  $\lambda^{<\lambda}<\cf(\kappa)=\kappa$, the set $N:=\bigcup\{ A_t\mid t\in{}^{<\lambda}\lambda, |A_t|<\kappa\}$ has size $<\kappa$.
In particular, $B:= [\mathbb T]_c\setminus N$ has size $\kappa$.
As $\im(\vec g)$ is a $\kappa$-Luzin subset of ${}^\lambda\lambda$, 
fix some $s\in {}^{<\lambda}\lambda$ such that, for every $s'\in{}^{<\lambda}\lambda$ extending $s$,
there exists $\beta \in B$ such that $s'\s g_\beta$. As $B\s[\mathbb T]_c$, it follows that $\mathbb T_s\s\mathbb T$.
	
	Finally, to show that $[\mathbb T_s]_c=\{ \beta<\kappa\mid \forall\eta<\lambda\,(c_\beta\restriction\eta\in\mathbb  T_s)\}$  is in $\mathcal T_c$,
	we need to prove that $\sup([\mathbb T_s]_c)=\kappa$.
Recalling that $s$ extends $g_\beta$ for some $\beta\in B\s(\kappa\setminus N)$, we infer that $|A_s|=\kappa$.
As $[\mathbb T_s]_c$ clearly covers $A_s$, we infer that $\sup([\mathbb T_s]_c)=\kappa$.
\end{proof}
In particular, $\cf(\mathcal T_c,{\supseteq})\le\lambda^{<\lambda}$. So, we are done.
\end{proof}

\section{Theorem~A}\label{Sec3}
\begin{defn}[{\cite[\S3.3]{paper36}}] Let $\lambda<\kappa$ be a pair of infinite cardinals, $e:[\kappa]^2\rightarrow\lambda$ be a coloring, and $S$ be a subset of $\kappa$.
\begin{enumerate}
\item $e$ is \emph{$S$-coherent} iff for all $\beta\le\gamma<\delta<\kappa$ with $\beta\in S$,
$$\sup\{\xi<\beta\mid e(\xi,\gamma)\neq e(\xi,\delta)\}<\beta;$$
\item $\partial(e):=\{ \alpha\in\acc(\kappa)\mid \forall \gamma<\kappa\forall \nu<\lambda\,\sup\{ \xi<\alpha\mid e(\xi,\gamma)\le\nu\}<\alpha\}$.
\end{enumerate}
\end{defn}
\begin{fact}[{\cite[Lemma~3.31]{paper36}}]\label{nonreflecting} Let $\lambda<\kappa$ be a pair of infinite regular cardinals. 

For a stationary subset $S\s E^\kappa_{\lambda}$, the following are equivalent:
\begin{itemize}
\item $S$ is nonreflecting;
\item There exists an $S$-coherent coloring $e:[\kappa]^2\rightarrow\lambda$  such that $\partial(e)\supseteq S$.
\end{itemize}
\end{fact}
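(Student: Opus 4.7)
The plan is to verify the two implications separately.

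For $(\Leftarrow)$, I argue by contradiction, and in fact without even needing $S$-coherence. Suppose some $\alpha\in\acc(\kappa)$ has $S\cap\alpha$ stationary in $\alpha$. A preliminary observation is that $\cf(\alpha)>\lambda$: since $S\s E^\kappa_\lambda$, any club $C\s\alpha$ of order type $\cf(\alpha)\le\lambda$ would have all its limit points of cofinality strictly less than $\lambda$, producing a club of $\alpha$ disjoint from $S$. Since $\cf(\alpha)>\lambda$ is regular and $e(\cdot,\alpha)$ takes values in $\lambda$, a routine splitting argument---if each fiber $\{\xi<\alpha\mid e(\xi,\alpha)=\nu\}$ were non-stationary in $\alpha$, intersecting the $\lambda$-many witnessing clubs (fewer than $\cf(\alpha)$ of them) would produce an ordinal in no fiber---yields some $\nu<\lambda$ and a stationary $T\s\alpha$ with $e(\cdot,\alpha)\equiv\nu$ on $T$. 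For every $\beta\in S\cap\alpha$, the hypothesis $\beta\in\partial(e)$ applied with $\gamma:=\alpha$ and this $\nu$ forces $T\cap\beta$ to be bounded in $\beta$. But any $\beta$ in the stationary set $S\cap\alpha\cap\lim(T)$ satisfies $\sup(T\cap\beta)=\beta$, a contradiction.

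For $(\Rightarrow)$, I plan to use a walks-on-ordinals construction. Leveraging nonreflection, I first choose a $C$-sequence $\langle C_\gamma\mid \gamma\in\acc(\kappa)\rangle$ with $C_\gamma\cap S=\emptyset$ for every $\gamma$: when $\cf(\gamma)>\omega$, this is possible since $S\cap\gamma$ is non-stationary in $\gamma$; when $\cf(\gamma)=\omega$, any cofinal $\omega$-sequence can be thinned to dodge $S$. I then define $e(\xi,\gamma)\in\lambda$ from the walk $\gamma=\gamma_0>\gamma_1>\cdots>\gamma_n=\xi$ determined by $\gamma_{i+1}:=\min(C_{\gamma_i}\setminus\xi)$, via a $\rho_1$-style recipe recording some order-type invariant of the walk, truncated to $\lambda$.

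The main obstacle is to calibrate this recipe so as to secure both required clauses from a single definition. For $S$-coherence, the point to exploit is that $\beta\in S$ never belongs to any $C_\alpha$, so the walks from $\gamma,\delta\ge\beta$ towards $\xi<\beta$ cannot land on $\beta$ exactly and, for $\xi$ sufficiently close to $\beta$, share a common terminal portion below $\beta$, giving $e(\xi,\gamma)=e(\xi,\delta)$ there. For $\partial(e)\supseteq S$, the point to exploit is that $\cf(\beta)=\lambda$, so the final steps of the walk from any $\gamma<\kappa$ down to $\xi$ with $\xi$ near $\beta$ traverse clubs whose order-type invariants $\otp(C_{\gamma_i}\cap\xi)$ tend to $\lambda$, forcing $e(\xi,\gamma)$ to exceed any prescribed $\nu<\lambda$ once $\xi$ is close enough to $\beta$. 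Producing a concrete walk-based recipe that reconciles these two local demands is the technical heart of the argument, and it is the construction carried out in the reference cited.
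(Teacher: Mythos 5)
This statement is quoted in the paper as a \emph{Fact} imported from \cite[Lemma~3.31]{paper36}; the paper contains no proof of it, so your proposal has to stand on its own. Your backward implication does: the reduction to $\cf(\alpha)>\lambda$, the use of $\lambda^+$-completeness of the club filter on $\alpha$ to extract a stationary fiber $T$ with $e(\cdot,\alpha)\equiv\nu$ on $T$, and the clash between ``$T\cap\beta$ bounded for all $\beta\in S\cap\alpha$'' and ``$S\cap\alpha$ meets $\acc^+(T)$'' is a complete and correct argument, and your observation that $S$-coherence plays no role in this direction is accurate.

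The forward implication, however, is not proved: you describe a strategy (walk along a $C$-sequence avoiding $S$, extract a ``$\rho_1$-style order-type invariant, truncated to $\lambda$'') and then explicitly state that producing a recipe reconciling $S$-coherence with $\partial(e)\supseteq S$ ``is the technical heart of the argument, and it is the construction carried out in the reference cited.'' That deferred step is precisely the content of the lemma, so this is a genuine gap rather than a routine omission. It is also not a gap that closes itself: for general regular $\kappa>\lambda^+$ the order types $\otp(C_{\gamma_i}\cap\xi)$ need not lie below $\lambda$, and any truncation to $\lambda$ threatens both of the properties you need --- the steps of the walk taken above $\beta$ contribute values that differ between $\gamma$ and $\delta$ (endangering $S$-coherence on a tail of $\beta$), and collapsing large order types can cap $e(\xi,\gamma)$ below $\lambda$ on an unbounded subset of $\beta$ (endangering $\beta\in\partial(e)$). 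The paper itself signals that $\rho_1$-based arguments are special to the case $\kappa=\lambda^+$ (see the remark preceding Theorem~3.5 about locally small colorings). To make this direction self-contained you would either have to give the walk invariant explicitly and verify both clauses, or switch to the more elementary route of building the fiber maps $e(\cdot,\delta):\delta\rightarrow\lambda$ by transfinite recursion on $\delta<\kappa$, maintaining mod-bounded agreement on $\beta$ for $\beta\in S\cap\delta$ and convergence to $\lambda$ along each $\beta\in S$, and using nonreflection of $S$ at each limit stage to choose a club along which to glue the previously defined fibers.
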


\begin{theorem}\label{maintheorem} Suppose:
\begin{enumerate}
\item $\theta\le\lambda<\kappa$ are infinite cardinals, with $\lambda,\kappa$ regular,
\item $c:\lambda\times\kappa\rightarrow\lambda$ is a strongly unbounded coloring,
\item $c$ is tight. Furthermore, $(\cf(\mathcal T_c,{\supseteq}))^{<\theta}\le\kappa$, and
\item $\mathcal S$ is a partition of some nonreflecting stationary subset of $E^{\kappa}_{\lambda}$ into stationary sets.
\end{enumerate}

Then there exists a club $C\s\kappa$ such that $\clubsuit_{\ad}(\{ S\cap C\mid S\in \mathcal S\},\allowbreak\lambda,{<}\theta)$ holds.
If either $\kappa=\lambda^+$ or $\lambda^{<\lambda}=\lambda$, then $C$ can moreover be taken to be whole of $\kappa$.
\end{theorem}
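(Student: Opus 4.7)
The plan is to combine the nonreflecting-coherent structure from Fact~\ref{nonreflecting} with a tree bookkeeping distilled from the tight strongly unbounded coloring $c$. Set $S:=\bigcup\mathcal S$; Fact~\ref{nonreflecting} yields an $S$-coherent $e:[\kappa]^2\to\lambda$ with $\partial(e)\supseteq S$, and tightness yields a $\supseteq$-cofinal subfamily $\langle\mathbb T_i\mid i<\mu\rangle$ of $\mathcal T_c$ with $\mu\le\kappa$, so that every $\mathbb T\in\mathcal T_c$ contains some $\mathbb T_i$ as a subtree. Applying strong unboundedness to the cofinal set $[\mathbb T_i]_c$ for each $i<\mu$ preselects a splitting node $t_i\in\mathbb T_i$ at some height $\eta_i<\lambda$ with $V_i:=\{c_\beta(\eta_i)\mid \beta\in[\mathbb T_i]_c,\ t_i\subseteq c_\beta\}$ unbounded in $\lambda$; enumerate $V_i$ increasingly as $\langle\gamma^i_\zeta\mid\zeta<\lambda\rangle$. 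Using $\mu^{<\theta}\le\kappa$, fix $\phi:\kappa\to[\mu]^{<\theta}$ such that $\phi^{-1}(\{X\})\cap S'$ is stationary for every $X\in[\mu]^{<\theta}$ and every $S'\in\mathcal S$, and fix once and for all a partition $\langle J_j\mid j<\theta\rangle$ of a cofinal subset of $\lambda$ into cofinal pieces.

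Pass to a club $C\subseteq\kappa$ on which every $\alpha$ is an accumulation point of each $[\mathbb T_i]_c$ for $i\in\phi(\alpha)$, via a standard diagonal argument since $|\phi(\alpha)|<\theta<\kappa$. For each $\alpha\in C\cap S$, enumerate $\phi(\alpha)=\{i^\alpha_j\mid j<\tau_\alpha\}$ with $\tau_\alpha<\theta$, and for each $\zeta<\lambda$ put
\[
A^\zeta_\alpha:=\bigcup_{j<\tau_\alpha}\{\beta\in[\mathbb T_{i^\alpha_j}]_c\cap\alpha\mid t_{i^\alpha_j}\subseteq c_\beta,\ c_\beta(\eta_{i^\alpha_j})=\gamma^{i^\alpha_j}_\zeta,\ e(\beta,\alpha)\in J_j\}.
\]
Clause~(1) of $\clubsuit_{\ad}$ then follows: $\alpha\in\partial(e)$ forces the $e$-layer $\{\beta<\alpha\mid e(\beta,\alpha)\in J_j\}$ to be cofinal in $\alpha$, which combines with the splitting structure of $[\mathbb T_{i^\alpha_j}]_c$ below $\alpha$ to give cofinality of each $A^\zeta_\alpha$, while disjointness across $\zeta$ comes from the fixed value $c_\beta(\eta_{i^\alpha_j})=\gamma^{i^\alpha_j}_\zeta$ and across $j$ from the $J_j$-discriminator. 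Clause~(3) follows from $S$-coherence: for $\alpha<\alpha'$ in $S$ the functions $e(\cdot,\alpha)$ and $e(\cdot,\alpha')$ agree past a bounded initial segment of $\alpha$, localizing any overlap $A\cap A'$ to that segment (up to absorbing a bookkeeping mismatch into a secondary diagonal club).

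For clause~(2), given $\mathcal B\in[[\kappa]^\kappa]^{<\theta}$, form $\mathbb T[B]:=\{c_\beta\restriction\eta\mid\beta\in B,\,\eta<\lambda\}\in\mathcal T_c$ for each $B\in\mathcal B$, pick $i_B<\mu$ with $\mathbb T_{i_B}\subseteq\mathbb T[B]$, and set $X_{\mathcal B}:=\{i_B\mid B\in\mathcal B\}\in[\mu]^{<\theta}$. For $\alpha\in\phi^{-1}(\{X_{\mathcal B}\})\cap S'\cap C$ (stationary in each $S'$), one verifies that $A^\zeta_\alpha\cap B$ is cofinal in $\alpha$ for every $\zeta<\lambda$ and every $B\in\mathcal B$. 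I expect this verification to be the main obstacle: the relation $\mathbb T_{i_B}\subseteq\mathbb T[B]$ only certifies that every node of $\mathbb T_{i_B}$ is realized as some $c_{\beta'}\restriction\eta$ with $\beta'\in B$, but this matching $\beta'$ need not lie below $\alpha$ even when a would-be witness $\beta\in[\mathbb T_{i_B}]_c\cap\alpha$ does. Closing this gap will require a further refinement of $C$ by the $B$-traces together with an exchange argument using the upper-cofinality of $e(\cdot,\alpha)$ on $\partial(e)$ and the $J_j$-discriminator to convert tree-branch witnesses into genuine $B\cap\alpha$-witnesses. The theorem's final clause on $C$ reflects when the exchange runs unconditionally: in the cases $\kappa=\lambda^+$ or $\lambda^{<\lambda}=\lambda$ the bookkeeping $\phi$ is available without any preliminary thinning and the diagonal absorptions collapse, so $C$ may be taken to be all of $\kappa$.
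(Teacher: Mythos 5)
Your construction misplaces where the tree-guessing has to happen, and this breaks Clause~(2) of $\clubsuit_{\ad}$ irreparably in the form you set it up. You define $A^\zeta_\alpha$ as a set of ordinals $\beta$ lying in $\bigcup_j[\mathbb T_{i^\alpha_j}]_c$, but Clause~(2) demands $\sup(A^\zeta_\alpha\cap B)=\alpha$ for an \emph{arbitrary} cofinal $B\s\kappa$, and a generic $B$ need not meet $\bigcup_j[\mathbb T_{i^\alpha_j}]_c$ at all: $\mathbb T_{i_B}\s\mathbb T[B]$ only says that the thinner tree sits inside the tree generated by $B$, while the elements of $B$ themselves have no reason to have all of their restrictions inside $\mathbb T_{i_B}$, i.e.\ $B\cap[\mathbb T_{i_B}]_c$ may be empty. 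You flag a version of this, but the ``exchange argument'' you hope for cannot be run: a witness must simultaneously belong to $B$ and to $A^\zeta_\alpha$, and the latter is by definition contained in $\bigcup_j[\mathbb T_{i^\alpha_j}]_c$. The paper's construction is turned inside out relative to yours: the guess $\sigma\in\seq_{<\theta}(\mathcal T)$ is encoded into a partition of each $S\in\mathcal S$, and for $\alpha\in S_\sigma$ one attaches to $\alpha$ distinguished fibers $d_{\alpha,j}:=c_{\beta_{\alpha,j}}$ with $\beta_{\alpha,j}\in[\mathbb T_j]_c$; membership of an \emph{arbitrary} $\xi<\alpha$ in $A^i_{\alpha,j}$ is then decided by comparing the fiber $d_{\xi,0}$ attached to $\xi$ against $d_{\alpha,j}$, via $\Delta(d_{\xi,0},d_{\alpha,j})$ and $e(\xi,\alpha)$. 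That way the captured ordinals are genuine elements of the given cofinal set, and strong unboundedness is invoked on $\{\beta_{\alpha,j}\mid\alpha\in S^3\}$ (not once per tree in advance) inside an elementary-submodel argument to produce them.

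Two further steps fail as stated. For Clause~(3), coherence works against you: for $\alpha<\alpha'$ in $S$, $S$-coherence makes $e(\cdot,\alpha)$ and $e(\cdot,\alpha')$ agree on a tail below $\alpha$, so if $\alpha,\alpha'$ carry the same indices your membership conditions coincide on a tail of $\alpha$ and $A^\zeta_\alpha\cap A^\zeta_{\alpha'}$ is \emph{cofinal} in $\alpha$, the opposite of almost-disjointness. The mechanism that actually yields Clause~(3) is the cap $e(\xi,\alpha)\le\hat d_{\alpha,j}(\Delta(d_{\xi,0},d_{\alpha,j}))$ combined with $\alpha\in\partial(e)$: a cofinal intersection would contain cofinally many $\xi$ with $e(\xi,\alpha)$ above the threshold $\hat d_{\alpha,j}(\Delta(d_{\alpha,j},d_{\beta,j'}))$, which forces $\Delta(d_{\xi,0},d_{\beta,j'})$ to stabilize and traps those $\xi$ in a single bounded sublevel set of $e(\cdot,\beta)$. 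Your definition has no such cap, and coherence is used in the paper only to transfer the computation of $e(\xi,\alpha)$ to a fixed $\delta$ during the stationarity argument. Finally, $\partial(e)$ only asserts that the sublevel sets $\{\xi<\alpha\mid e(\xi,\alpha)\le\nu\}$ are bounded; it does not make $\{\xi<\alpha\mid e(\xi,\alpha)\in J_j\}$ cofinal, so even Clause~(1) is unjustified. In the paper, Clause~(1) is obtained only on a club $C$ (via a separate stationarity claim), and the case $\kappa=\lambda^+$ or $\lambda^{<\lambda}=\lambda$ is handled by gluing in hand-built ladders $a^i_\alpha$ for $\alpha\notin C$ using the surjection onto $T^*$ --- that, and not the availability of your bookkeeping $\phi$, is what the theorem's final sentence is about.
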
 
\begin{proof}  The proof is an elaboration of a construction from \cite[\S2]{TodPartTop}.
Let $\mathcal T$ be a dense subfamily of $\mathcal T_c$ of minimal size.
Let $\seq_{<\theta}(\mathcal T)$ denote the collection of all nonempty sequences of elements of $\mathcal T$ of length $<\theta$.
By Clause~(3) above, $|\seq_{<\theta}(\mathcal T)|\le\kappa$.
Let $\mathcal S$ be a given partition of some nonreflecting stationary subset $\mathbb{S}$ of $E^{\kappa}_{\lambda}$ into stationary sets.
Then, for every $S\in\mathcal S$, let $\langle S_\sigma\mid \sigma\in\seq_{<\theta}(\mathcal T)\rangle$ be a partition of $S$ into stationary sets.

\begin{claim}\label{claim3331} The set $\Sigma:=\{c_\beta\restriction\Lambda\mid \beta<\kappa, \Lambda<\lambda\}$ has size $<\kappa$.
\end{claim}
\begin{proof} Suppose not. Since $\kappa$ is a regular cardinal greater than $\lambda$, it follows that there exist $B\in[\kappa]^\kappa$ and $\Lambda<\lambda$ on which the map $\beta\mapsto c_\beta\restriction\Lambda$
is injective. By possibly shrinking $B$ further, we may also assume the existence of some $\epsilon<\lambda$ such that $c_\beta[\Lambda]\s\epsilon$ for all $\beta\in B$.
But then $c$ cannot be strongly unbounded. Indeed, for every $t:\eta\rightarrow\lambda$, if $\eta<\Lambda$, then $\{ c_\beta(\eta)\mid \beta\in B, t\s c_\beta\}\s \epsilon$,
and if $\eta\in[\Lambda,\lambda)$, then $|\{ c_\beta(\eta)\mid \beta\in B, t\s c_\beta\}|\le1$.
\end{proof}
Set $T^*:=\{ t\in \Sigma\mid |\{\beta<\kappa\mid t\s c_\beta\}|=\kappa\}$.
By Claim~\ref{claim3331}, $|T^*|<\kappa$, so we may fix a surjection $f:\kappa\rightarrow T^*$ with the property that for every $\epsilon<\kappa$,
$\{f(\xi+1)\mid \epsilon<\xi<\epsilon+|T^*|\}=T^*$.
By Claim~\ref{claim3331}, we may also fix a large enough ordinal $\varrho<\kappa$ such that $\bigcup\{\{ \beta<\kappa \mid t\s c_\beta\}\mid  t\in \Sigma\setminus T^*\}\s\varrho$.

Next, we turn to recursively define an injective matrix $\langle \beta_{\alpha, j}\mid \alpha<\kappa,  j<\zeta(\alpha)\rangle$ of ordinals in $\kappa$, as follows.
Suppose that $\alpha<\kappa$ and that 
$\langle \beta_{\bar\alpha,\bar j}\mid\bar\alpha<\alpha,\bar j<\zeta(\bar\alpha)\rangle$ has already been defined.
\begin{itemize}
\item[$\br$] If $\{ (S,\sigma)\in\mathcal S\times\seq_{<\theta}(\mathcal T)\mid \alpha\in S_\sigma\}$ is nonempty,
then it is a singleton, so let $(S,\sigma)$ denote its unique element.
Write $\sigma=\langle \mathbb T_ j\mid  j<\zeta\rangle$. Set $\zeta(\alpha):=\zeta$,
and then, by recursion on $ j<\zeta(\alpha)$, set 
\begin{equation}\label{defofbetaalpha}
\tag{$\star$}\beta_{\alpha,j}:=\min([\mathbb T_ j]_c\setminus(\varrho\cup\{ \beta_{\bar\alpha,\bar j},\beta_{\alpha,j'}\mid\bar\alpha<\alpha , \bar  j\le \zeta(\bar\alpha),  j'< j\})).
\end{equation}
\item[$\br$] Otherwise, set $\zeta(\alpha):=1$ and 
\begin{equation}\label{defofbetaalpha2}
\tag{$\star\star$}\beta_{\alpha,0}:=\min(\{\beta<\kappa\mid f(\alpha)\s c_\beta\}\setminus(\varrho\cup\{\beta_{\bar\alpha,\bar j}\mid \bar\alpha<\alpha, \bar j<\zeta(\bar\alpha)\})).
\end{equation}
\end{itemize}

Having constructed the above injective matrix of ordinals in $\kappa$,
we derive a corresponding injective matrix 
$\vec d=\langle d_{\alpha, j}\mid\alpha<\kappa, j<\zeta(\alpha)\rangle$  by setting $d_{\alpha, j}:=c_{\beta_{\alpha, j}}$.

For all $x\neq y$ in ${}^\lambda\lambda$, denote $\Delta(x,y):=\min\{\eta<\lambda\mid x(\eta)\neq y(\eta)\}$.
As $\lambda$ is regular, for every $x\in {}^\lambda\lambda$, we may attach a strictly increasing function $\hat x:\lambda\rightarrow\lambda$ 
satisfying $\hat x(\eta)\ge x(\eta)$ for all $\eta<\lambda$.

Next, as $\mathbb{S}$ is a nonreflecting stationary subset of $E^{\kappa}_{\lambda}$, 
by Fact~\ref{nonreflecting}, we may fix a coloring $e:[\kappa]^2\rightarrow\lambda$ that is $\mathbb{S}$-coherent
and such that $\mathbb{S}\s \partial(e)$.

Fix a surjection $\varsigma:\kappa\rightarrow\lambda$ such that $\partial(e)\cap \varsigma^{-1}\{i\}$ is stationary for every $i<\lambda$.
Next, for every $\alpha<\kappa$ and $ j<\zeta(\alpha)$, define a map $h_{\alpha, j}:\alpha\rightarrow\lambda$ via:
$$h_{\alpha, j}(\xi):=\varsigma(\min\{ \gamma\in(\xi,\alpha]\mid \gamma=\alpha\text{ or }e(\gamma,\alpha)\le{{\Delta(d_{\xi,0},d_{\alpha, j})}}\}).$$

For all $\alpha<\kappa$ and $i<\lambda$,
let $A^i_{\alpha,*}:=\bigcup_{ j<\zeta(\alpha)}A^i_{\alpha, j}$,
where for every $ j<\zeta(\alpha)$:
$$A^i_{\alpha, j}:=\{\xi<\alpha\mid  h_{\alpha, j}(\xi)=i\ \&\ e(\xi,\alpha)\le \hat{d}_{\alpha, j}(\Delta(d_{\xi,0},d_{\alpha, j}))\}.$$

\begin{claim}\label{claim331} Suppose that $\alpha\in\partial(e)$, $\beta<\kappa$, $j<\zeta(\alpha), j'<\zeta(\beta)$, and $i,i'<\lambda$.

If $(\alpha,j)\neq(\beta,j')$. Then $\sup(A_{\alpha, j}^i\cap A^{i'}_{\beta, j'})<\alpha$.
\end{claim}
\begin{proof} Suppose that $(\alpha,j)\neq(\beta,j')$,
and then let $\eta:=\Delta(d_{\alpha, j},d_{\beta, j'})$.
Towards a contradiction, suppose that $A^i_{\alpha, j}\cap A^{i'}_{\beta, j'}$ is cofinal in $\alpha$.
Set  $\nu:=\hat d_{\alpha, j}(\eta)$.
As $\alpha\in\partial(e)$, the following set is cofinal in $\alpha$:
$$Y:=\{ \xi\in A^i_{\alpha, j}\cap A^{i'}_{\beta, j'}\mid e(\xi,\alpha)>\nu\}.$$

For every $\xi\in Y$, $\hat d_{\alpha, j}(\eta)=\nu<e(\xi,\alpha)\le \hat d_{\alpha, j}(\Delta( d_{\xi,0},d_{\alpha, j}))$,
so since $\hat d_{\alpha, j}$ is strictly increasing, $\Delta( d_{\xi,0},d_{\alpha, j})>\eta=\Delta(d_{\alpha, j},d_{\beta, j'})$,
and hence $\Delta(d_{\xi,0},d_{\beta, j'})=\eta$.
Set $\tau:=\hat d_{\beta, j'}(\eta)$. As $Y\s \alpha\cap A^{i'}_{\beta, j'}$, altogether $Y\s \{\xi<\alpha\mid e(\xi,\beta)\le \tau\}$.
As $\alpha\in\partial(e)$, $Y$ is bounded in $\alpha$. This is a contradiction.
\end{proof}
 
\begin{claim}\label{claim332} Let $\alpha\in\mathbb S$ and $i\neq i'$ in $\lambda$. Then $\sup(A^{i}_{\alpha,*}\cap A^{i'}_{\alpha,*})<\alpha$.
\end{claim}
\begin{proof}  Suppose not. As
$\zeta(\alpha)<\theta\le \lambda=\cf(\alpha)$, there must exist $ j,j'<\zeta(\alpha)$ such that 
$\sup(A^i_{\alpha, j}\cap A^{i'}_{\alpha, j'})=\alpha$. As $\alpha\in\mathbb S\s\partial(e)$, Claim~\ref{claim331} implies that $j=j'$.
But it is evident that $A^i_{\alpha, j}$ and $A^{i'}_{\alpha, j}$ are disjoint.
\end{proof}

For all $\alpha\in\mathbb S$ and $i<\lambda$, let $$A^i_\alpha:=A^i_{\alpha,*}\setminus\bigcup_{i'<i} A^{i'}_{\alpha,*}.$$
Clearly, $\langle A^i_\alpha\mid i<\lambda\rangle$ consists of pairwise disjoint subsets of $\alpha$.
\begin{claim} Let $(\alpha,\beta)\in[\mathbb S]^2$ and $i,i'<\lambda$. Then $\sup(A^i_\alpha\cap A^{i'}_\beta)<\alpha$.
\end{claim}
\begin{proof} Suppose not. In particular, $\sup(A^i_{\alpha, *}\cap A^{i'}_{\beta, *})=\alpha$.
However, $\zeta(\alpha),\zeta(\beta)<\theta\le \lambda=\cf(\alpha)$, so there must exist $ j<\zeta(\alpha)$ and $ j'<\zeta(\beta)$ such that 
$\sup(A^i_{\alpha, j}\cap A^{i'}_{\beta, j'})=\alpha$, contradicting Claim~\ref{claim331}.
\end{proof}

Now, we turn to inspect the guessing features of the matrix $\langle A_\alpha^i\mid \alpha\in\mathbb S, i<\lambda\rangle$.

\begin{claim}\label{claim312} Let $S\in\mathcal S$, and let $\langle X_ j\mid  j<\zeta\rangle$ be any sequence of cofinal subsets of $\kappa$ with $0<\zeta<\theta$.
Then $\{ \alpha\in S\mid \forall i<\lambda\forall j<\zeta\,\sup(A^i_\alpha\cap X_ j)=\alpha\}$ is stationary.
\end{claim}
\begin{proof} For all $ j<\zeta$ and $t\in \Sigma$, denote $X_ j^t:=\{ \xi\in X_ j\mid t\s d_{\xi,0}\}$.
Set $$T_ j:=\{ t\in \Sigma\mid |X_ j^t|=\kappa\}.$$
By Claim~\ref{claim3331}, $N_ j:=\bigcup\{ X_ j^t\mid t\in \Sigma\setminus T_ j\}$ is the small union of sets of size $<\kappa$, so that $|X_ j\setminus N_ j|=\kappa$.
For all $\xi\in X_ j\setminus N_ j$ and $\eta<\lambda$,
$c_{\beta_{\xi,0}}\restriction\eta=d_{\xi,0}\restriction\eta\in T_ j$,
so that $[T_ j]_c$ covers $\{ \beta_{\xi,0}\mid \xi\in X_ j\setminus N_ j\}$, and
hence $T_ j\in\mathcal T_c$. 
Recalling that $\mathcal T$ is dense in $\mathcal T_c$,
we may now pick $\mathbb T_ j\in\mathcal T$ with $\mathbb T_j\s T_j$.
In particular, $\sigma:=\langle \mathbb T_ j\mid j<\zeta\rangle$ is in $\seq_{<\theta}(\mathcal T)$,
and $S_\sigma$ is stationary.

Towards a contradiction, suppose that $\{ \alpha\in S\mid \forall i<\lambda\forall j<\zeta\,\sup(A^i_\alpha\cap X_ j)=\alpha\}$ is nonstationary.
As $S_\sigma$ is a stationary subset of $S$, we may fix $i<\lambda$ and $j<\zeta$
for which the following set is stationary:
$$S^0:=\{\alpha\in S_\sigma\mid \sup(A^i_\alpha\cap X_ j)<\alpha\}.$$
For every $\alpha\in S^0$, since $i<\lambda=\cf(\alpha)$ and since $A^i_\alpha=A^i_{\alpha,*}\setminus\bigcup_{i'<i} A^{i'}_{\alpha,*}$,
Claim~\ref{claim332} implies that $\sup(A^i_{\alpha,*}\cap X_ j)<\alpha$.
In particular, $\sup(A^i_{\alpha,j}\cap X_ j)<\alpha$.
So by Fodor's lemma, we may fix an $\epsilon<\kappa$ such that the following set is stationary:
$$S^1:=\{\alpha\in S_\sigma\mid \sup(A^i_{\alpha,j}\cap X_ j)=\epsilon<\alpha\}.$$

By the choice of the map $\varsigma$, the set $\Gamma$ of all $\gamma\in\partial(e)\cap\varsigma^{-1}\{i\}$
for which there exists an elementary submodel $M_\gamma\prec H_{\kappa^+}$
containing $\{\vec d,X_ j,\Sigma\}$ and satisfying $\gamma=M_\gamma\cap\kappa$ is stationary.
Fix $\delta\in\mathbb S\cap\acc^+(\Gamma\setminus\epsilon)$.
As $\delta\in\mathbb S$ and $e$ is $\mathbb{S}$-coherent, we may fix $S^2\in[S^1\setminus\delta]^{\kappa}$ along with some $\varepsilon<\delta$ such that, for every $\alpha\in S^2$,
$$\{ \xi<\delta\mid e(\xi,\alpha)\neq e(\xi,\delta)\}\s\varepsilon.$$
Pick $\gamma\in\Gamma\cap\delta$ above $\max\{\epsilon,\varepsilon\}$,
and then fix a model $M_\gamma$ witnessing that $\gamma\in\Gamma$.

Put $\nu:=e(\gamma,\delta)$. 
By Claim~\ref{claim3331}, we may find a cofinal subset of $S^3\s S^2$ on which the map $\alpha\mapsto d_{\alpha, j}\restriction\nu$ is constant.

Next, as $c$ is strongly unbounded and $\{ \beta_{\alpha, j}\mid \alpha\in S^3\}$ is cofinal in $\kappa$,
we may find an ordinal $\eta<\lambda$ and a map $t:\eta\rightarrow\lambda$ such that
$$\sup\{ c_{\beta_{\alpha,j}}(\eta)\mid \alpha\in S^3, t\s c_{\beta_{\alpha,j}}\}=\lambda.$$
Equivalently, for every $\tau<\lambda$, for some $\alpha\in S^3$, $d_{\alpha, j}\restriction\eta=t$ and $d_{\alpha, j}(\eta)>\tau$.
Clearly, $\eta\ge\nu$.

Pick for a moment $\alpha^*\in S^3$ such that $t\s d_{\alpha^*, j}$.
Since $\alpha^*\in S^3\s S_\sigma$, 
Equation~\eqref{defofbetaalpha} and the definition of $\sigma$ implies that $\beta_{\alpha^*, j}$ is in $[\mathbb T_ j]_c$.
Recalling Definition~\ref{deftrees}, from $c_{\beta_{\alpha^*, j}}\restriction\eta=d_{\alpha^*, j}\restriction\eta=t$, we infer that $t\in \mathbb T_ j$. 
As $\mathbb T_ j\s T_ j$, this means that $|X_ j^t|=\kappa$.
It thus follows from $\{\vec d,X_ j,\Sigma\}\in M_\gamma$ that $\sup(X_ j^t\cap\gamma)=\gamma$.
Now, as $\gamma\in\partial(e)$, $G:=\{\bar\gamma<\gamma\mid e(\bar\gamma,\delta)\le{{\eta}}\}$ is bounded below $\gamma$.
Altogether, we may find $\xi\in X_ j^t\cap \gamma$ above $\max\{\epsilon,\varepsilon,\sup(G)\}$. 

Set $\tau:=\max\{e(\xi,\delta),d_{\xi,0}(\eta)\}$,
and then pick $\alpha\in S^3$ such that $d_{\alpha, j}\restriction\eta=t$ and $d_{\alpha, j}(\eta)>\tau$.
As $d_{\alpha, j}\restriction\eta=t=d_{\xi,0}\restriction\eta$ and $d_{\alpha, j}(\eta)>d_{\xi,0}(\eta)$, we infer that $\Delta(d_{\xi,0},d_{\alpha, j})=\eta$. 
As $\varepsilon<\xi<\delta$, altogether, $$\hat d_{\alpha, j}(\Delta(d_{\xi,0},d_{\alpha, j}))=\hat d_{\alpha, j}(\eta)\ge d_{\alpha, j}(\eta)>\tau\ge e(\xi,\delta)=e(\xi,\alpha).$$ 
Next, from $\Delta(d_{\xi,0},d_{\alpha, j})=\eta$ and the fact that $\xi>\varepsilon$,
we also infer that $$h_{\alpha, j}(\xi)=\varsigma(\min\{ \bar\gamma\in(\xi,\alpha]\mid \bar\gamma=\alpha\text{ or }e(\bar\gamma,\delta)\le{{\eta}}\}).$$
Since $e(\gamma,\delta)=\nu\le{{\eta}}$ and $\varsigma(\gamma)=i$,
it follows that if $h_{\alpha, j}(\xi)\neq i$, then there exists $\bar\gamma\in(\xi,\gamma)$ such that $e(\bar\gamma,\delta)\le{{\eta}}$,
contradicting the fact that $\xi>\sup(G)$. 
So, it is the case that $h_{\alpha, j}(\xi)=i$. Consequently, $\xi\in A^i_{\alpha,j}$.

Altogether, we established that $\xi$ is an element of $A^i_{\alpha,j}\cap X_j$ above $\epsilon$, contradicting the fact that $\alpha\in S^3\s S^1$.
\end{proof}

The next claim implies that there exists a club $C\s\kappa$ such that,
for every $\alpha\in C$, 
for every $i<\lambda$, $\sup(A^i_\alpha)=\alpha$.

\begin{claim} Let $S\s\mathbb S$ be stationary. 
Then $\{ \alpha\in S\mid \forall i<\lambda\,\sup(A^i_\alpha\cap S)=\alpha\}$ is stationary.
\end{claim}
\begin{proof} Suppose not, and fix $i<\lambda$ 
for which the following set is stationary:
$$S^0:=\{\alpha\in S\mid \sup(A^i_\alpha\cap S)<\alpha\}.$$
It follows that there exists an $\epsilon<\kappa$ such that
$$S^1:=\{\alpha\in S\mid \sup(A^i_{\alpha,0}\cap S)=\epsilon<\alpha\}.$$
Similarly to the proof of the previous claim,
find ordinals $\varepsilon<\gamma<\delta$ and a set $S^2\in[S^1\setminus\delta]^\kappa$ such that:
\begin{itemize} 
\item $\gamma,\delta\in\partial(e)$;
\item $\delta=M_\delta\cap\kappa$ for some elementary submodel $M_\delta\prec H_{\kappa^+}$ containing $\{\vec d,S,\Sigma\}$;
\item $\gamma=M_\gamma\cap\kappa$ for some elementary submodel $M_\gamma\prec H_{\kappa^+}$ containing $\{\vec d,S,\Sigma\}$;
\item for every $\alpha\in S^2$, $\{ \xi<\delta\mid e(\xi,\alpha)\neq e(\xi,\delta)\}\s\varepsilon$.
\end{itemize}

Put $\nu:=e(\gamma,\delta)$. 
Then find a cofinal subset of $S^3\s S^2$ on which the map $\alpha\mapsto d_{\alpha,0}\restriction\nu$ is constant.
As $\{ \beta_{\alpha,0}\mid \alpha\in S^3\}$ is cofinal in $\kappa$,
the choice of the coloring $c$ provides an ordinal $\eta<\lambda$ and a map $t:\eta\rightarrow\lambda$ such that, 
for every $\tau<\lambda$, for some $\alpha\in S^3$, $d_{\alpha,0}\restriction\eta=t$ and $d_{\alpha,0}(\eta)>\tau$.
The same analysis is true for any final segment of $S^3$ and hence, by Clause~(1) and the pigeonhole principle, 
we may fix some $t:\eta\rightarrow\lambda$ such that, 
for every $\tau<\lambda$, for cofinally many $\alpha\in S^3$, $d_{\alpha,0}\restriction\eta=t$ and $d_{\alpha,0}(\eta)>\tau$.
Clearly, $\eta\ge\nu$.

As $\{\vec d,S,\Sigma\}\in M_\delta$ and $S^3\cap M_\delta=\emptyset$, by elementarity, the set of $\xi\in S\cap M_\delta$ such that $d_{\xi,0}\restriction\eta=t$
is cofinal in $\gamma$.
As $\gamma\in\partial(e)$, $G:=\{\bar\gamma<\gamma\mid e(\bar\gamma,\delta)\le{{\eta}}\}$ is bounded below $\gamma$.
So, we may find $\xi\in S\cap \gamma$ above $\max\{\epsilon,\varepsilon,\sup(G)\}$ such that $d_{\xi,0}\restriction\eta=t$.
Set $\tau:=\max\{e(\xi,\delta),d_{\xi,0}(\eta)\}$,
and then pick $\alpha\in S^3$ such that $d_{\alpha,0}\restriction\eta=t$ and $d_{\alpha,0}(\eta)>\tau$.
From this point on, a verification identical to that of Claim~\ref{claim312}
shows that $\xi$ is an element of $A^i_{\alpha,0}\cap S$ above $\epsilon$, contradicting the fact that $\alpha\in S^3\s S^1$.
\end{proof}

In summary, we have shown that there exists a club $C\s\kappa$ such that:
\begin{enumerate}
\item[(a)] For every $\alpha\in C$, for every $i<\lambda$, $\sup(A^i_\alpha)=\alpha$;
\item[(b)] For every $S\in\mathcal S$, for every sequence $\langle X_ j\mid  j<\zeta\rangle$ of cofinal subsets of $\kappa$ with $0<\zeta<\theta$,
the set $\{ \alpha\in S\mid \forall i<\lambda\forall j<\zeta\,\sup(A^i_\alpha\cap X_ j)=\alpha\}$ is stationary;
\item[(c)] For all $(\alpha,\beta)\in[\mathbb S]^2$ and $i,i'<\lambda$, 
$\sup(A^i_\alpha\cap A^{i'}_\beta)<\alpha$.
\end{enumerate}

Suppose now that either $\kappa=\lambda^+$ or $\lambda^{<\lambda}=\lambda$, 
and let us prove that $\clubsuit_{\ad}(\mathcal S,\lambda,{<}\theta)$ holds. For this, it suffices to define for every $\alpha\in \mathbb S\setminus C$,
a sequence $\langle a_\alpha^i\mid i<\lambda\rangle$ of pairwise disjoint cofinal subsets of $\alpha$ such that
the amalgam of $\langle \langle a_\alpha^i\mid i<\lambda\rangle\mid \alpha\in \mathbb S\setminus C\rangle $
and $\langle \langle A_\alpha^i\mid i<\lambda\rangle\mid \alpha\in \mathbb S\cap C\rangle $ will form an almost-disjoint system.
To this end, let $\alpha\in \mathbb S\setminus C$.
\begin{claim} For every $\eta<\lambda$, $\sup\{\xi<\alpha\mid \Delta(d_{\xi,0},d_{\alpha,0})\ge\eta\}=\alpha$.
\end{claim}
\begin{proof} Let $\eta<\lambda$, $t:=d_{\alpha,0}\restriction\eta$ and $\epsilon<\alpha$;
we need to find $\xi$ with $\epsilon<\xi<\alpha$ such that $t\s d_{\xi,0}$.
Now, recall that by the construction of $\vec d$, $d_{\alpha,0}=c_\beta$ for some ordinal $\beta\in\kappa\setminus\varrho$.
Consequently, $t=c_\beta\restriction\eta\in T^*$.
So since either $\kappa=\lambda^+$ or $\lambda^{<\lambda}=\lambda$,
Claim~\ref{claim3331} implies that $|T^*|\le\lambda=\cf(\alpha)$. Then, since the surjection $f$ was chosen to satisfy 
$\{f(\xi+1)\mid \epsilon<\xi<\epsilon+|T^*|\}=T^*$,
we may find some $\xi$ with $\epsilon<\xi<\xi+1<\alpha$ such that $f(\xi+1)=t$.
As $\bigcup\mathcal S\s \mathbb S$,
we get from Equation~\eqref{defofbetaalpha2} that $d_{\xi+1,0}=c_{\beta_{\xi+1,0}}\supseteq t$,
and hence $\Delta(d_{\xi+1,0},d_{\alpha,0})\ge\eta$. 
\end{proof}

Using the preceding claim, fix a strictly increasing sequence $\langle \xi_\eta^\alpha\mid \eta<\lambda\rangle$ of ordinals, converging to $\alpha$,
such that, for every $\eta<\lambda$, $\Delta(d_{\xi_\alpha^\alpha,0},d_{\alpha,0})\ge\eta$.
Then, let $\langle a_\alpha^i\mid i<\lambda\rangle$ be some partition of $\{\xi_\eta^\alpha\mid \eta<\lambda\}$ into $\lambda$ many sets of size $\lambda$.

As each $a_\alpha^i$ has order-type $\lambda$, the verification of almost-disjointness of the merged systems boils down to verifying the following case.
\begin{claim} Let $\alpha\in \mathbb S\setminus C$ and $\beta\in \mathbb S\cap C$ above $\alpha$.
Let $i,{i'}<\lambda$. Then $\sup(a_\alpha^i\cap A_\beta^{i'})<\alpha$.
\end{claim}
\begin{proof} Suppose not. Fix $ j'<\zeta(\beta)$ such that 
$a_{\alpha}^i\cap A^{i'}_{\beta, j'}$ is cofinal in $\alpha$.
Set $\eta:=\Delta(d_{\alpha,0},d_{\beta, j'})$. By the choice of $a_\alpha^i$, $\{ \xi\in a^i_\alpha\mid \Delta(d_{\xi,0},d_{\alpha,0})\le\eta\}$ is bounded in $\alpha$,
and hence the following set is cofinal in $\alpha$:
$$Y:=\{ \xi\in a_{\alpha}^i\cap A^{i'}_{\beta, j'}\mid \Delta(d_{\xi,0},d_{\alpha,0})>\eta\}.$$

For every $\xi\in Y$,  $\Delta( d_{\xi,0},d_{\alpha,0})>\eta=\Delta(d_{\alpha,0},d_{\beta, j'})$,
and hence $\Delta(d_{\xi,0},d_{\beta, j'})=\eta$.
Set $\tau:=\hat d_{\beta, j'}(\eta)$. As $Y\s\alpha\cap A_{\beta, j'}$, altogether $Y\s \{\xi<\alpha\mid e(\xi,\beta)\le \tau\}$.
As $\alpha\in \mathbb{S}\s \partial(e)$, $Y$ is bounded in $\alpha$. This is a contradiction.
\end{proof}
This completes the proof.
\end{proof}

\subsection{Variations}
A second read of the proof of Theorem~\ref{maintheorem}
makes it clear that the conclusion remains valid even after relaxing Clause~(3) in the hypothesis 
to $\cov(\cf(\mathcal T_c,{\supseteq}),\allowbreak\lambda,\theta,2)\le\kappa$.
In the other direction,
by waiving Clause~(3) completely, the above proof yields the following:
\begin{theorem}\label{thm34} Suppose $\lambda<\kappa$ is a pair of infinite regular cardinals,
and $\mathbb S$ is a nonreflecting stationary subset of $E^\kappa_\lambda$.

If there exists a strongly unbounded coloring $c:\lambda\times\kappa\rightarrow\lambda$,
then there exists a club $C\s\kappa$ and a matrix
$\langle A_\alpha^i\mid \alpha\in\mathbb S\cap C, i<\alpha\rangle$ such that:
\begin{enumerate}
\item For every $\alpha\in\mathbb S\cap C$, 
$\langle A^i_\alpha\mid i<\alpha\rangle$ is a sequence 
of pairwise disjoint cofinal subsets of $\alpha$;
\item For every stationary $S\s\mathbb S$,
there are stationarily many $\alpha\in S\cap C$ such that $\sup(A^i_\alpha\cap S)=\alpha$ for all $i<\alpha$; 
\item For all $(\alpha,\alpha')\in[\mathbb S\cap C]^2$, $i<\alpha$ and $i'<\alpha'$, 
$\sup(A^i_\alpha\cap A^{i'}_{\alpha'})<\alpha$.
\end{enumerate} 

In the special case that $\kappa=\lambda^+$ or $\lambda^{<\lambda}=\lambda$, one can take $C$ to be whole of $\kappa$.\qed
\end{theorem}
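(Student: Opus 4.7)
The plan is to recycle the construction from the proof of Theorem~\ref{maintheorem}, dispensing with the tightness apparatus and the partition $\langle S_\sigma\mid\sigma\in\seq_{<\theta}(\mathcal T)\rangle$ it supports. The key simplification is to replace the $\lambda$-valued surjection $\varsigma:\kappa\to\lambda$ of the earlier proof with a $\kappa$-valued regressive one, which enlarges the matrix index from $i<\lambda$ to $i<\alpha$ while rendering the sets $A^i_\alpha$ \emph{inherently} pairwise disjoint within each $\alpha$, sidestepping the subtractive disjointification. Since Clause~(2) of Theorem~\ref{thm34} asks only for a single-stationary-set guess rather than the joint $\theta$-sequence guess of Theorem~\ref{maintheorem}, no tree-based bookkeeping is needed either.

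Via Fact~\ref{nonreflecting}, fix an $\mathbb S$-coherent coloring $e:[\kappa]^2\to\lambda$ with $\mathbb S\s\partial(e)$. Split $\partial(e)$ into $\kappa$ pairwise disjoint stationary pieces $\langle P_i\mid i<\kappa\rangle$ with $P_i\s(i,\kappa)$, and define $\varsigma:\kappa\to\kappa$ by $\varsigma(\gamma):=i$ for $\gamma\in P_i$ (and $0$ elsewhere), so that $\varsigma$ is regressive on $\kappa\setminus\{0\}$ and $\partial(e)\cap\varsigma^{-1}\{i\}$ is stationary for every $i<\kappa$. Recursively pick an injective sequence $\langle\beta_\alpha\mid \alpha<\kappa\rangle$ of ordinals in $\kappa\setminus\varrho$, interleaving with the $f$-enumeration as in $(\star)$--$(\star\star)$ of the earlier proof in the special case $\kappa=\lambda^+$ or $\lambda^{<\lambda}=\lambda$ so as to enable $C=\kappa$. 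Set $d_\alpha:=c_{\beta_\alpha}$ and define $h_\alpha:\alpha\to\alpha$ by
\[h_\alpha(\xi):=\varsigma\bigl(\min\{\gamma\in(\xi,\alpha]\mid \gamma=\alpha\text{ or }e(\gamma,\alpha)\le\Delta(d_\xi,d_\alpha)\}\bigr),\]
which lands below $\alpha$ by regressivity of $\varsigma$. For $i<\alpha$, set $A^i_\alpha:=\{\xi<\alpha\mid h_\alpha(\xi)=i\ \&\ e(\xi,\alpha)\le\hat d_\alpha(\Delta(d_\xi,d_\alpha))\}$; as $h_\alpha$ is a function, $\langle A^i_\alpha\mid i<\alpha\rangle$ is automatically pairwise disjoint (Clause~(1)).

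The analog of Claim~\ref{claim331} — with the case ``$\alpha=\alpha'$, $i\ne i'$'' trivial by the $h$-value distinction, and the case ``$\alpha\ne\alpha'$'' handled by the same $\Delta(d_\alpha,d_{\alpha'})$-based argument together with $\mathbb S$-coherence of $e$ and $\mathbb S\s\partial(e)$ — establishes $\sup(A^i_\alpha\cap A^{i'}_{\alpha'})<\alpha$ for $(\alpha,\alpha')\in[\mathbb S\cap C]^2$, yielding Clause~(3). For Clause~(2), fix any stationary $S\s\mathbb S$ and any target $i^*<\kappa$; the single-stationary-set guessing argument in the proof of Theorem~\ref{maintheorem} applies essentially verbatim — using strong unboundedness of $c$ on the cofinal set $\{\beta_\alpha\mid \alpha\in S\setminus(i^*+1)\}$ and the stationarity of $\partial(e)\cap\varsigma^{-1}\{i^*\}$ — to show that $\{\alpha\in S\setminus(i^*+1)\mid \sup(A^{i^*}_\alpha\cap S)<\alpha\}$ is nonstationary. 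Diagonally intersecting the resulting clubs over $i^*<\kappa$ then witnesses simultaneous guessing of $S$ on all $A^i_\alpha$ ($i<\alpha$) for stationarily many $\alpha\in S$; applying the same argument with $S:=\mathbb S$ delivers the club $C$ on which each $A^i_\alpha$ is cofinal in $\alpha$.

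The principal delicacy is verifying that the single-stationary-set guessing argument survives the substitution of the $\kappa$-valued $\varsigma$: one must still be able to fix an elementary submodel $M_\gamma\prec H_{\kappa^+}$ of size $<\kappa$ containing the requisite parameters $\{\vec d,S,\Sigma,i^*\}$ with $\gamma=M_\gamma\cap\kappa\in\partial(e)\cap\varsigma^{-1}\{i^*\}$. This forces $\gamma\ge i^*$ (since $i^*\in M_\gamma$) and hence requires $P_{i^*}\s(i^*,\kappa)$ — which is precisely what our partition guarantees — after which the remainder of the argument, threading through $\Delta$-agreement and strong unboundedness of $c$, goes through without change.
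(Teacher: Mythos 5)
Your central device --- replacing the $\lambda$-valued surjection $\varsigma$ of the proof of Theorem~\ref{maintheorem} by a regressive $\varsigma:\kappa\rightarrow\kappa$ all of whose fibers meet $\partial(e)$ stationarily, so that $h_\alpha$ maps $\alpha$ into $\alpha$ and the sets $A^i_\alpha$ ($i<\alpha$) are literal fibers of $h_\alpha$, hence pairwise disjoint with no need for the subtractive disjointification of Claim~\ref{claim332} --- is exactly the modification the paper intends when it asserts that the proof of Theorem~\ref{maintheorem}, with tightness waived, yields Theorem~\ref{thm34}. Your checks are sound: Claim~\ref{claim331} never consults $\varsigma$, so Clause~(3) is unaffected; for each fixed $i^*<\kappa$ the single-stationary-set guessing argument (the second, non-tightness claim in the proof of Theorem~\ref{maintheorem}, which uses the two models $M_\gamma\prec M_\delta$ in place of the tree bookkeeping) shows that $\{\alpha\in S\setminus(i^*+1)\mid\sup(A^{i^*}_\alpha\cap S)<\alpha\}$ is nonstationary; and the diagonal intersection of the resulting clubs delivers Clauses~(1) and~(2). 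The requirement $P_{i^*}\subseteq(i^*,\kappa)$ is indeed what keeps $\varsigma$ regressive, and nothing in the model argument breaks.

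The genuine gap is the final sentence of the theorem. You invoke the $(\star\star)$/$f$-interleaving ``so as to enable $C=\kappa$'' but never execute the corresponding step, which in the proof of Theorem~\ref{maintheorem} is a substantive closing move: for each $\alpha\in\mathbb S\setminus C$ one must manufacture, from the set $\{\xi^\alpha_\eta\mid\eta<\lambda\}$ of ordinals satisfying $\Delta(d_{\xi^\alpha_\eta,0},d_{\alpha,0})\ge\eta$, a family of pairwise disjoint cofinal subsets of $\alpha$ and then reprove the last almost-disjointness claim against the $A^{i'}_\beta$. Here that family must have $\alpha$ many members, not $\lambda$ many, whereas $\{\xi^\alpha_\eta\mid\eta<\lambda\}$ has only $\lambda$ elements. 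When $\kappa=\lambda^+$ this is repairable since $|\alpha|=\lambda$ and a set of order type $\lambda$ splits into $\lambda$ cofinal pieces; but when $\lambda^{<\lambda}=\lambda$ and $\kappa>\lambda^+$, an ordinal such as $\alpha=\omega_1+\omega\in E^{\omega_2}_\omega$ admits at most countably many pairwise disjoint cofinal subsets, so no construction indexed by $i<\alpha$ can succeed at such $\alpha$; any argument for $C=\kappa$ must either reindex (e.g.\ by $i<\cf(\alpha)$ or restrict to additively indecomposable $\alpha$) or be confined to $\kappa=\lambda^+$. You should either supply the missing construction in the case $\kappa=\lambda^+$ and flag the obstruction in the other case, or state explicitly that your argument establishes the theorem only up to the club $C$.
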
 

Let $\clubsuit_{\ad^*}(\mathcal S,\mu,{<}\theta)$ 
denote the strengthening of $\clubsuit_{\ad}(\mathcal S,\mu,{<}\theta)$
obtained by replacing Clause~(3) of Definition~\ref{clubad} by:
\begin{enumerate}
\item[($3^*$)] For all $A\neq A'$ from $\bigcup_{S\in\mathcal S}\bigcup_{\alpha\in S}\mathcal A_\alpha$, $|A\cap A'|<\cf(\sup(A))$.
		\end{enumerate} 
In the special case that $\kappa=\lambda^+$, one can use in the proof of Theorem~\ref{maintheorem} a \emph{locally small} coloring $e:[\kappa]^2\rightarrow\lambda$ (such as the map $\rho_1$ from \cite[\S6.2]{TodWalks}), and then get:

\begin{theorem} Suppose:
\begin{enumerate}
\item $\lambda$ is an infinite regular cardinal,
\item there exists a tight strongly unbounded coloriung $c:\lambda\times\lambda^+\rightarrow\lambda$, and
\item $\mathcal S$ is a partition of $E^{\lambda^+}_{\lambda}$ into stationary sets.
\end{enumerate}

Then $\clubsuit_{\ad^*}(\mathcal S,\lambda,{<}\lambda)$ holds.\qed
\end{theorem}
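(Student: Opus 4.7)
The plan is to re-run the proof of Theorem~\ref{maintheorem} in the special case $\kappa:=\lambda^+$, $\theta:=\lambda$, $\mathbb S:=E^{\lambda^+}_\lambda$, making one single modification: instead of invoking Fact~\ref{nonreflecting} to pull an arbitrary $\mathbb S$-coherent coloring, I will use the \emph{locally small} coloring $\rho_1:[\lambda^+]^2\rightarrow\lambda$ from walks on ordinals (\cite[\S6.2]{TodWalks}) as my $e$. Note that $\mathbb S$ is trivially nonreflecting in $\lambda^+$ since no $\alpha<\lambda^+$ has $\cf(\alpha)>\lambda$, and that $\rho_1$ is coherent in the usual sense (hence $\mathbb S$-coherent) and satisfies for every $\beta<\lambda^+$ and $\nu<\lambda$ the bound $|\{\xi<\beta\mid \rho_1(\xi,\beta)\le\nu\}|<\lambda$. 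In particular, for each $\alpha\in\mathbb S$, any such set intersected with $\alpha$ has size $<\lambda=\cf(\alpha)$ and is therefore bounded in $\alpha$, so $\partial(\rho_1)\supseteq\mathbb S$, as required to feed the construction.

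With this choice of $e$ in hand, I carry out verbatim the construction of the matrix $\langle A^i_\alpha\mid\alpha\in\mathbb S, i<\lambda\rangle$ together with the auxiliary sequences $\langle a^i_\alpha\mid i<\lambda\rangle$ for $\alpha\in\mathbb S\setminus C$ from the proof of Theorem~\ref{maintheorem}. Clauses~(1) and (2) of $\clubsuit_{\ad^*}(\mathcal S,\lambda,{<}\lambda)$ are identical to those of $\clubsuit_{\ad}(\mathcal S,\lambda,{<}\lambda)$ and so go through unchanged, using the original forms of Claim~\ref{claim312} and its analogue. The key upgrade is to the almost-disjointness clause: every place where the original proof derives a bound of the form ``$Y\subseteq\{\xi<\alpha\mid e(\xi,\beta)\le\tau\}$ is bounded in $\alpha$'' from $\alpha\in\partial(e)$ now gives the strictly stronger conclusion $|Y|<\lambda$ by local smallness of $\rho_1$. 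Tracing this improvement through Claim~\ref{claim331}, Claim~\ref{claim332}, and the analogous cross-intersection claim for $a^i_\alpha\cap A^{i'}_\beta$, and using that $\zeta(\alpha),\zeta(\beta)<\theta=\lambda$ together with the regularity of $\lambda$ to handle the unions over $j,j'$, one obtains $|A\cap A'|<\lambda$ for any two distinct members $A,A'$ of the amalgamated system. Since every relevant $A$ is cofinal in some $\alpha\in\mathbb S$ with $\cf(\alpha)=\lambda$, this is precisely clause~$(3^*)$.

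The one case not directly subsumed by the propagation argument is the intersection $a^i_\alpha\cap a^{i'}_\beta$ for distinct $\alpha,\beta\in\mathbb S\setminus C$. Here the required bound falls out of the $\Delta$-structure of $\vec d$: if $\xi$ lies simultaneously in $\{\xi^\alpha_\eta\mid\eta<\lambda\}$ and $\{\xi^\beta_{\eta'}\mid\eta'<\lambda\}$, then $\Delta(d_{\xi,0},d_{\alpha,0})\ge\eta$ and $\Delta(d_{\xi,0},d_{\beta,0})\ge\eta'$, which forces $\Delta(d_{\alpha,0},d_{\beta,0})\ge\min(\eta,\eta')$; since $\Delta(d_{\alpha,0},d_{\beta,0})$ is a fixed ordinal below $\lambda$, at most $\lambda$-many---indeed $<\lambda$-many---indices are admissible, giving $|a^i_\alpha\cap a^{i'}_\beta|<\lambda$ automatically. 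The main technical point to verify is that every bounded-set argument in Section~\ref{Sec3} is actually of the ``$e(\xi,\beta)\le\tau$'' shape and hence genuinely benefits from local smallness rather than merely from $\alpha\in\partial(e)$; a careful second reading of the relevant claims confirms this, so the promotion from $\clubsuit_{\ad}$ to $\clubsuit_{\ad^*}$ costs only a one-line sharpening in each case.
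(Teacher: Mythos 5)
Your proposal is correct and is exactly the paper's intended argument: the paper's entire proof of this theorem is the one-line remark that in the case $\kappa=\lambda^+$ one reruns Theorem~\ref{maintheorem} with the locally small, coherent $\rho_1$ in place of the generic $\mathbb S$-coherent coloring supplied by Fact~\ref{nonreflecting}, whereupon every appeal to $\alpha\in\partial(e)$ of the form ``$Y\subseteq\{\xi<\beta\mid e(\xi,\beta)\le\tau\}$ is bounded'' upgrades to $|Y|<\lambda$, yielding clause~$(3^*)$. The additional details you supply (the $\Delta$-argument for $a^i_\alpha\cap a^{i'}_\beta$, which could also be handled by noting each $a^i_\alpha$ has order type $\lambda$) check out.
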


\section{Theorem~B}\label{Sec4}
In this section, we give two sufficient conditions for a strong form of $\clubsuit_{\ad}$ to hold. 
The strong form under discussion is a double strengthening of $\clubsuit_{\ad}(\mathcal S,\mu,{<}\kappa)$,
and it reads as follows.
\begin{defn}  Let $\mathcal S$ be a collection of stationary subsets of a regular uncountable cardinal $\kappa$,
and $\mu$ be a nonzero cardinal $<\kappa$.
The principle 
$\clubsuit_{\ad^*}(\mathcal S,\mu,\kappa)$ asserts the existence of a sequence $\langle \mathcal A_\alpha\mid \alpha\in\bigcup\mathcal S\rangle$ such that:
\begin{enumerate}
\item For every $\alpha\in\acc(\kappa)\cap\bigcup\mathcal S$, $\mathcal A_\alpha$ is a pairwise disjoint family of $\mu$ many cofinal subsets of $\alpha$;
\item For every sequence $\langle B_i\mid i<\kappa\rangle$ of cofinal subsets of $\kappa$, for every $S\in\mathcal S$, there are stationarily many $\alpha\in S$ such that,
for all $A\in\mathcal A_\alpha$ and $i<\alpha$,  $\sup(A\cap B_i)=\alpha$;
\item For all $A\neq A'$ from $\bigcup_{S\in\mathcal S}\bigcup_{\alpha\in S}\mathcal A_\alpha$, $|A\cap A'|<\cf(\sup(A))$.
\end{enumerate} 
\end{defn}

An inspection of the proof \cite[\S3]{MR4068775} yields the following useful fact:

\begin{fact}[{\cite{MR4068775}}]\label{claim512} 
For an infinite cardinal $\lambda$, the following are equivalent:
\begin{enumerate}
\item $\stick(\lambda^+)$ holds, i.e.,
there exists a sequence $\langle x_\beta\mid \beta<\lambda^+\rangle$ 
of elements of $[\lambda^+]^\lambda$
such that, for every cofinal $X\s\lambda^+$, there exists $\beta<\lambda^+$ such that $x_\beta\s X$; 
\item There exists a sequence $\langle x_\beta\mid \beta<\lambda^+\rangle$ 
of elements of $[\lambda^+]^\lambda$ satisfying the following.
For every sequence $\langle A_\alpha\mid \alpha<\lambda^+\rangle$ of elements of $[\lambda^+]^{\le\lambda}$ such that $|A_\alpha\cap A_\beta|<\lambda$ for all $\alpha<\beta<\lambda^+$,
for every cofinal $X\s\lambda^+$, there exists $\beta<\lambda^+$ such that $x_\beta\s X$ and,
for every $a\in[\lambda^+]^{<\cf(\lambda)}$, $|x_\beta\setminus\bigcup_{\alpha\in a}A_\alpha|=\lambda$.
\end{enumerate}
\end{fact}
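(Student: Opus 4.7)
The direction $(2)\implies(1)$ is immediate: applying $(2)$ to the trivial almost-disjoint family $\langle A_\alpha:=\emptyset\mid\alpha<\lambda^+\rangle$ (whose pairwise intersections are vacuously of size $<\lambda$) collapses the extra clause to $|x_\beta|=\lambda$, recovering the defining property of $\stick(\lambda^+)$.

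For the substantive direction $(1)\implies(2)$, the plan is to show that any sequence $\langle x_\beta\mid\beta<\lambda^+\rangle$ witnessing $\stick(\lambda^+)$ already witnesses $(2)$. Fix such a sequence, an almost-disjoint family $\vec A=\langle A_\alpha\rangle$, and a cofinal $X\s\lambda^+$; assume toward contradiction that every $\beta$ with $x_\beta\s X$ is \emph{bad}, meaning there is $a_\beta\in[\lambda^+]^{<\cf(\lambda)}$ with $|x_\beta\setminus C_{a_\beta}|<\lambda$, writing $C_a:=\bigcup_{\alpha\in a}A_\alpha$. Note that $|C_a|\le\lambda$ for every such $a$ (a $<\cf(\lambda)$-sized union of $\le\lambda$-sized sets), so $X\setminus C_a$ is cofinal for each individual $a$ and stick already supplies $\beta$ good for that $a$; the content of $(2)$ is promoting this to one $\beta$ good for \emph{all} $a$ simultaneously.

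The main step is an absorption argument. Fix a well-ordering $<^*$ of $H_{(\lambda^+)^+}$ and canonicalize $a_\beta$ as the $<^*$-least valid witness, so that $a_\beta$ is definable from $\beta,\vec A,X,<^*$. Form $U:=\bigcup\{a_\beta\mid \beta<\lambda^+,\ x_\beta\s X\}\s\lambda^+$. In the easy case $|U|\le\lambda$, one has $|C_U|\le\lambda$, so $X^*:=X\setminus C_U$ is cofinal; stick produces $\beta$ with $x_\beta\s X^*$, badness of $\beta$ forces $a_\beta\s U$, and hence $x_\beta\cap C_{a_\beta}\s x_\beta\cap C_U=\emptyset$, contradicting $|x_\beta\cap C_{a_\beta}|=\lambda$ (which follows from $|x_\beta|=\lambda$ and $|x_\beta\setminus C_{a_\beta}|<\lambda$).

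The principal obstacle is the remaining case $|U|=\lambda^+$. Here the plan is to pass to an elementary submodel $M\prec H_{(\lambda^+)^+}$ of cardinality $\lambda$ containing $\langle x_\beta\rangle,\vec A,X,<^*$, and set $\delta:=M\cap\lambda^+\in E^{\lambda^+}_\lambda$. Since $U\in M$ yet $|U|=\lambda^+$, elementarity forces $U$ to be cofinal in $\lambda^+$. A pigeonhole producing a \emph{responsible} index $\alpha^*(\beta)\in a_\beta$ with $|x_\beta\cap A_{\alpha^*(\beta)}|=\lambda$ (whose existence follows from $|a_\beta|<\cf(\lambda)$ and $|A_\alpha|\le\lambda$ by subadditivity), combined with a $\Delta$-system refinement on $\{a_\beta\}$ and the almost-disjointness of $\vec A$, then manufactures from $M\cap U$ a cofinal $X^{**}\s X$ containing no stick set, contradicting stick. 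This final diagonalization against the AD structure is where the full argument of \cite{MR4068775} is invoked.
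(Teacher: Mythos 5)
The paper does not actually prove this statement---it is quoted as a Fact extracted from \cite{MR4068775}---so there is no in-paper argument to compare against; what matters is whether your proposal stands on its own, and it does not. Your direction $(2)\implies(1)$ is fine (take $A_\alpha:=\emptyset$). The problem is the very first move of $(1)\implies(2)$: it is simply false that an arbitrary $\stick(\lambda^+)$-sequence witnesses $(2)$, so no amount of filling in the later steps can rescue the plan. Concretely, take $\lambda=\aleph_0$ and work in a model of $\diamondsuit$ (so that both $\stick(\omega_1)$ and $\clubsuit$ hold). A $\clubsuit$-sequence $\langle x_\delta\mid\delta\in\acc(\omega_1)\rangle$ with $\otp(x_\delta)=\omega$ and $\sup(x_\delta)=\delta$, re-indexed over all of $\omega_1$, is a $\stick(\omega_1)$-sequence whose members are \emph{pairwise almost disjoint}: if $\delta<\delta'$ then $x_\delta\cap x_{\delta'}\s x_{\delta'}\cap\delta$ is finite. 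Now feed this very sequence back in as the almost-disjoint family, $A_\alpha:=x_\alpha$, and take $X:=\omega_1$. For every $\beta$, the singleton $a:=\{\beta\}\in[\omega_1]^{<\omega}$ gives $x_\beta\setminus\bigcup_{\alpha\in a}A_\alpha=x_\beta\setminus x_\beta=\emptyset$, so no $\beta$ satisfies the conclusion of $(2)$. Hence the whole content of the Fact is that one can \emph{build a new} sequence from a given $\stick$-sequence; the claim that the original one already works cannot be the right route.

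Even setting this aside, the argument is incomplete exactly where it needs to be complete. The absorption step disposes only of the degenerate case $|U|\le\lambda$; in the generic case $|U|=\lambda^+$ you describe a plan (elementary submodel, a ``responsible'' index $\alpha^*(\beta)$, a $\Delta$-system refinement) and then write that ``the full argument of \cite{MR4068775} is invoked.'' That is a deferral, not a proof: no cofinal $X^{**}\s X$ avoiding all $x_\beta$ is actually produced, and it is not explained how almost-disjointness of $\vec A$ together with the $\Delta$-system root yields one. (The local pigeonhole observation---that $|x_\beta\cap\bigcup_{\alpha\in a_\beta}A_\alpha|=\lambda$ with $|a_\beta|<\cf(\lambda)$ forces some single $\alpha\in a_\beta$ with $|x_\beta\cap A_\alpha|=\lambda$---is correct, but it is only the first step of the missing argument.) To repair the proof you must start from the correct statement: construct from a $\stick(\lambda^+)$-sequence a possibly different sequence with the stronger guessing property, as is done in \cite[\S3]{MR4068775}.
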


\begin{theorem} Suppose that $\stick(\lambda^+)$ holds for an infinite regular cardinal $\lambda$. 

For every partition $\mathcal S$ of $E^{\lambda^+}_\lambda$ into stationary sets,
$\clubsuit_{\ad^*}(\mathcal S,\lambda,\lambda^+)$ holds.
\end{theorem}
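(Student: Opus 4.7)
The plan is to mimic the high-level arc of the proof of Theorem~\ref{maintheorem}, replacing the tight strongly unbounded coloring with the enhanced stick sequence provided by Fact~\ref{claim512}(2).

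First, apply Fact~\ref{claim512} to $\stick(\lambda^+)$ to fix a sequence $\vec{x}=\langle x_\beta \mid \beta<\lambda^+\rangle\s[\lambda^+]^\lambda$ satisfying Clause~(2): for every $\lambda$-almost-disjoint sequence $\langle A_\alpha\mid \alpha<\lambda^+\rangle$ in $[\lambda^+]^{\le\lambda}$ and every cofinal $X\s\lambda^+$, some $\beta$ has $x_\beta\s X$ and $|x_\beta\setminus\bigcup_{\alpha\in a}A_\alpha|=\lambda$ for every $a\in[\lambda^+]^{<\cf(\lambda)}$. This stronger form is crucial because it permits locating stick witnesses inside prescribed cofinal sets even after a substantial almost-disjoint family is in hand. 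Also fix a bookkeeping bijection $\pi\colon\lambda^+\to\lambda^+\times\lambda$, writing $\pi(\beta)=(\gamma(\beta),i(\beta))$ with $\gamma(\beta)\le\beta$.

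Next, recursively on $\alpha\in E^{\lambda^+}_\lambda$, define $\mathcal A_\alpha=\{A^i_\alpha\mid i<\lambda\}$, maintaining the invariant that $\mathcal F_\alpha:=\bigcup\{\mathcal A_\beta\mid\beta\in E^{\lambda^+}_\lambda,\,\beta\le\alpha\}$ is a $\lambda$-almost-disjoint family of cofinal size-$\lambda$ subsets of the respective ordinals. At stage $\alpha$, for each $i<\lambda$ select an appropriate stick witness attached by $\pi$ to the pair $(\alpha,i)$, restrict it to $\alpha$, and thin against the current family $\mathcal F_{<\alpha}$ (which has size $\le\lambda$) to yield a cofinal subset of $\alpha$ that is disjoint from $A^j_\alpha$ for $j<i$ and of intersection of size $<\lambda$ with every member of $\mathcal F_{<\alpha}$. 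Since $|\mathcal F_{<\alpha}|\le\lambda<\lambda^+$, such thinning preserves cofinality.

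For the guessing verification, let $\langle B_i\mid i<\lambda^+\rangle$ be a sequence of cofinal subsets of $\lambda^+$ and let $S\in\mathcal S$. First observe that $C:=\{\alpha<\lambda^+\mid\forall i<\alpha,\ \sup(B_i\cap\alpha)=\alpha\}$ is a club, so $S\cap C$ is stationary. Suppose for contradiction that the guessing set is nonstationary in $S\cap C$; a Fodor extraction yields fixed $i_0<\lambda$ and $i'_0<\lambda^+$ together with a stationary $S^0\s S\cap C$ on which $\sup(A^{i_0}_\alpha\cap B_{i'_0})<\alpha$. Applying Fact~\ref{claim512}(2) with $X:=B_{i'_0}$ and the enumerated AD family $\mathcal F:=\bigcup_\alpha\mathcal A_\alpha$ delivers a stick witness $x_\beta\s B_{i'_0}$ that, by the bookkeeping $\pi$, participates in the definition of $A^{i_0}_\alpha$ for some $\alpha\in S^0$, contradicting the alleged boundedness.

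The main obstacle is orchestrating the recursive construction so that the stick witness returned in the verification is indeed used as a building block of the relevant $A^{i_0}_\alpha$; this requires careful alignment between $\pi$ and the extraction rule defining $A^i_\alpha$ from $x_\beta$, and in particular arranging that many pairs $(\alpha,i_0)$ with $\alpha\in S^0$ are served by stick witnesses whose restriction to $\alpha$ is cofinal there. A secondary subtlety is securing Clause~(3) of $\clubsuit_{\ad^*}$, which demands $|A\cap A'|<\cf(\sup A)=\lambda$ rather than mere $\sup$-separation; this is precisely the strong dispersion phenomenon provided by Clause~(2) of Fact~\ref{claim512}.
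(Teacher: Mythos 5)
Your proposal assembles the right ingredients (Fact~\ref{claim512}(2), a bookkeeping function, a recursion securing $\lambda$-almost-disjointness), but the architecture of the construction is not the paper's, and the point you yourself flag as ``the main obstacle'' is a genuine gap that your design cannot close. In your scheme each $A^i_\alpha$ is essentially a single stick $x_\beta$, pre-assigned to the pair $(\alpha,i)$ by $\pi$, restricted to $\alpha$ and thinned. The verification then needs the witness $x_\beta\s X$ handed back by Fact~\ref{claim512}(2) to be one of the sticks that was actually \emph{used} to build $A^{i_0}_\alpha$ for some $\alpha$ in the bad stationary set $S^0$. But the construction is completed before $X$ is known, and the bookkeeping assigns each index $\beta$ to only one pair $(\alpha,i)$; there is no reason the particular $\beta$ produced for $X:=B_{i'_0}$ lands on a pair with $\alpha\in S^0$ and $i=i_0$. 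The paper's construction avoids this entirely: at stage $\alpha$ it fixes a surjection $e_\alpha:\lambda\rightarrow\alpha$ and lets \emph{each} $A^i_\alpha$ sample \emph{one point} $\xi_{\alpha,j}$ from \emph{every} stick $x_\gamma$ with $\gamma<\alpha$ (more precisely, from each $x_{e_\alpha(\pi_0(j))}$ whose residual after removing $\bigcup\{A_{e_\alpha(j')}\mid j'\le j\}$ still has size $\lambda$), distributing the indices $j$ among the colors $i$ via $\pi_1$. Then, given $X$, one takes the least suitable $\beta_\epsilon$ for each $\epsilon$ and shows that on a club $D$ of $\delta$ every $x_{\beta_\epsilon}$ with $\epsilon<\delta$ has its index below $\delta$, hence is sampled by every $A^i_\delta$, yielding a point of $A^i_\delta\cap X$ above $\epsilon$. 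No contradiction/Fodor argument is needed, and the guessing in fact holds on a club (whence the full $\clubsuit_{\ad^*}(\mathcal S,\lambda,\lambda^+)$ for every partition $\mathcal S$, after a diagonal intersection over the $\lambda^+$ many $B_i$'s).

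A second, smaller gap: your assertion that ``since $|\mathcal F_{<\alpha}|\le\lambda<\lambda^+$, such thinning preserves cofinality'' is false as stated. A set of size $\lambda$ can be entirely covered by a union of $\lambda$ many sets each meeting it in a set of size $<\lambda$, so thinning can destroy cofinality (or the set altogether). The paper handles this by only admitting an index $j$ into $J_\alpha$ when the residual set still has size $\lambda$, accepting that some $\alpha$ may fail to be ``good'', and then proving that all $\delta$ in the club $D$ are automatically good because the designated witnesses $\beta_\epsilon$ were chosen to satisfy the second clause of Fact~\ref{claim512}(2). Relatedly, Clause~(3) of $\clubsuit_{\ad^*}$ is not ``provided by'' Fact~\ref{claim512}(2) as you suggest; it comes from the explicit avoidance of $\bigcup\{A_{e_\alpha(j')}\mid j'\le j\}$ at step $j$ of the recursion, while the Fact is what guarantees that this avoidance does not ruin cofinality at the relevant $\delta$'s.
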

\begin{proof} Let $\vec x=\langle x_\beta\mid\beta<\lambda^+\rangle$ be given by Fact~\ref{claim512}(2).
Fix a bijection $\pi:\lambda\leftrightarrow\lambda\times\lambda$ and then let $\pi_0,\pi_1$ be the unique maps from $\lambda$ to $\lambda$ 
to satisfy $\pi(j)=(\pi_0(j),\pi_1(j))$ for all $j<\lambda$.
For all nonzero $\alpha<\lambda^+$, fix a surjection $e_\alpha:\lambda\rightarrow\alpha$.

Define a sequence $\langle A_\alpha\mid\alpha<\lambda^+\rangle$ by recursion on $\alpha<\lambda^+$, as follows.
Set $A_\alpha:=\emptyset$.
Next, given a nonzero $\alpha<\lambda^+$ such that
$\langle A_{\bar\alpha}\mid\bar\alpha<\alpha\rangle$ has already been defined,
put $$J_\alpha:=\{ j<\lambda\mid |x_{e_\alpha(\pi_0(j))}\cap\alpha\setminus\bigcup\{ A_{e_\alpha(j')}\mid {j'\le j}\}|=\lambda\}.$$
Then, pick an injective sequence $\langle \xi_{\alpha,j}\mid j\in J_\alpha\rangle$ 
such that, for each $j\in J_\alpha$, $$\xi_{\alpha,j}\in x_{e_\alpha(\pi_0(j))}\cap\alpha\setminus\bigcup\{ A_{e_\alpha(j')}\mid {j'\le j}\}.$$
If $\{ \xi_{\alpha,j}\mid j\in J_\alpha\ \&\ \pi_1(j)=i\}$ happens to be cofinal in $\alpha$ for every $i<\lambda$, then we say that $\alpha$ is \emph{good}, and let
\begin{itemize}
\item $A_\alpha^i:=\{ \xi_{\alpha,j}\mid j\in J_\alpha\ \&\ \pi_1(j)=i\}$ for every $i<\lambda$,
and 
\item $A_\alpha:=\{\xi_{\alpha,j}\mid j\in J_\alpha\}$.
\end{itemize}
Otherwise, we just let $A_\alpha$ be any cofinal subset of $\alpha$ of order-type $\cf(\alpha)$,
and let $\langle A_\alpha^i\mid i<\cf(\alpha)\rangle$ be any partition of $A_\alpha$ into cofinal subsets of $\alpha$.

\begin{claim}\label{claim511} For all $\bar\alpha<\alpha<\lambda^+$, $|A_{\bar\alpha}\cap A_\alpha|<\lambda$.
\end{claim}
\begin{proof} If $\alpha$ is not good, then $\otp(A_\alpha)=\cf(\alpha)\le\lambda$, and the conclusion follows.
Next, suppose that $\alpha$ is good. Find $j'<\lambda$ such that $e_\alpha(j')=\bar\alpha$.
Then $A_{\bar\alpha}\cap A_\alpha\s\{ \xi_{\alpha,j}\mid j\in J_\alpha\cap j'\}$
\end{proof}

Next, given a cofinal $X\s\lambda^+$, 
for every $\epsilon<\lambda^+$, by Claim~\ref{claim511} and the choice of $\vec x$,
we may let $\beta_\epsilon$ denote the least $\beta<\lambda^+$ 
to satisfy both $x_{\beta}\s X\setminus\epsilon$ and
$|x_{\beta}\setminus\bigcup_{\alpha\in a}A_\alpha|=\lambda$ for every $a\in[\lambda^+]^{<\lambda}$.

Fix a set $E\in[\lambda^+]^{\lambda^+}$ on which the map $\epsilon\mapsto\beta_\epsilon$ is strictly increasing.
Consider the club $$D:=\{ \delta\in\acc^+(E)\mid \forall\epsilon\in E\cap\delta\, (\beta_\epsilon\cup x_{\beta_\epsilon}\s \delta)\}.$$

\begin{claim} Let $\delta\in D$. For every $i<\lambda$, $\sup(A_\delta^i\cap X)=\delta$.
\end{claim}
\begin{proof} Let $i<\lambda$ and let $\epsilon<\delta$. 
We shall show that there exists $j\in J_\delta$ such that 
$\xi_{\delta,j}$ is an element of $A_\delta^i\cap X\setminus\epsilon$.

Here we go.
By possibly increasing $\epsilon$, we may assume that $\epsilon\in E\cap\delta$.
Set $k:=e_\delta^{-1}(\beta_\epsilon)$,
and pick the unique $j<\lambda$ such that $\pi(j)=(k,i)$.
Then
$$x_{e_\delta(\pi_0(j))}\cap\delta\setminus\bigcup\{ A_{e_\delta(j')}\mid {j'\le j}\}=x_{\beta_\epsilon}\setminus \bigcup_{\alpha\in a}A_\alpha$$
for the set $a:=e_\delta[j+1]$ which is an element of $[\lambda^+]^{<\lambda}$. Consequently, $j\in J_\delta$,
and since $\pi_1(j)=i$,
$\xi_{\delta,j}$ is an element of $x_{\beta_\epsilon}\s X\setminus\epsilon$ that lies in $A_\delta^i$.
\end{proof}

It follows that for every partition $\mathcal S$ of $E^{\lambda^+}_\lambda$
into stationary sets, $\langle \{ A_\delta^i\mid i<\lambda\}\mid \delta\in E^{\lambda^+}_\lambda\rangle$
witnesses $\clubsuit_{\ad^*}(\mathcal S,\lambda,\lambda^+)$.
\end{proof}

\begin{lemma}\label{l51} Suppose that $\clubsuit_{\ad^*}(\{S\},\mu,\lambda^+)$ holds for some stationary subset $S$ of a successor cardinal $\lambda^+$.
Then $\clubsuit_{\ad^*}(\mathcal S,\mu,\lambda^+)$ holds for some partition $\mathcal S$ of $S$ into $\lambda^+$ many stationary sets.
\end{lemma}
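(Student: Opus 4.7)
Let $\vec{\mathcal A}=\langle \mathcal A_\alpha\mid \alpha\in S\rangle$ be the given witness to $\clubsuit_{\ad^*}(\{S\},\mu,\lambda^+)$. My plan is to produce the partition via an Ulam-matrix decomposition of $\lambda^+$, exploiting the simultaneous-guessing strength of the hypothesis (which packages $\lambda^+$-many cofinal sets into a single $\lambda^+$-sequence). To begin, fix an Ulam matrix $\{U^\xi_i\mid \xi<\lambda^+,\ i<\lambda\}$ on $\lambda^+$: for each fixed $i<\lambda$ the family $\{U^\xi_i\}_{\xi<\lambda^+}$ is pairwise disjoint, and for every stationary $X\s\lambda^+$ there is some $i<\lambda$ with $\{\xi<\lambda^+\mid X\cap U^\xi_i\text{ is stationary}\}$ of size $\lambda^+$.

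For each $\lambda^+$-sequence $\vec B=\langle B_i\mid i<\lambda^+\rangle$ of cofinal subsets of $\lambda^+$, set $G_{\vec B}:=\{\alpha\in S\mid \forall A\in\mathcal A_\alpha\,\forall i<\alpha,\ \sup(A\cap B_i)=\alpha\}$, which is stationary by hypothesis. The first step will be to extract a \emph{universal} row $i^*<\lambda$, meaning $\{\xi\mid G_{\vec B}\cap U^\xi_{i^*}\text{ is stationary}\}$ has size $\lambda^+$ for \emph{every} such $\vec B$. If no such row existed, I would pick for each $i<\lambda$ a bad witness $\vec B^i$ and interleave the $\lambda$-many $\vec B^i$ into a single $\lambda^+$-sequence $\vec B^\ast$ via a bijection $\lambda\times\lambda^+\cong\lambda^+$. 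Then $G_{\vec B^\ast}\s\bigcap_{i<\lambda}G_{\vec B^i}$ is itself stationary by $\clubsuit_{\ad^*}(\{S\},\mu,\lambda^+)$, so Ulam applied to $G_{\vec B^\ast}$ yields some $i^{**}<\lambda$ across which $G_{\vec B^\ast}$ is distributed---forcing its superset $G_{\vec B^{i^{**}}}$ to also distribute across $i^{**}$, contradicting the choice of $\vec B^{i^{**}}$. With $i^*$ in hand, set $Y:=\{\xi<\lambda^+\mid S\cap U^\xi_{i^*}\text{ is stationary}\}$ (of size $\lambda^+$ by Ulam applied to $S$) and declare the candidate partition $\mathcal S:=\{S\cap U^\xi_{i^*}\mid \xi\in Y\}$, absorbing any nonstationary leftover $S\setminus\bigcup_{\xi\in Y}U^\xi_{i^*}$ into one distinguished class.

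Clauses~(1) and~(3) of $\clubsuit_{\ad^*}(\mathcal S,\mu,\lambda^+)$ then transfer automatically from $\vec{\mathcal A}$. The substance is Clause~(2), and it is here that I expect the main obstacle: universality of $i^*$ furnishes $G_{\vec B}\cap T_\xi$ stationary for $\lambda^+$-many $T_\xi\in\mathcal S$ per $\vec B$, but not necessarily for \emph{every} $T_\xi\in\mathcal S$---universality permits up to $\lambda$ exceptional columns per $\vec B$, and since there are $2^{\lambda^+}$ candidate $\vec B$'s, the exceptional columns could in principle accumulate. The plan to overcome this is a second-round diagonal refinement. Within each class that turns out to be exceptional for some $\vec B$, I would iterate the Ulam argument on that class (the hypothesis still yields $\clubsuit_{\ad^*}$-type guessing on any stationary subset of $S$), or alternatively replace the offending $\mathcal A_\alpha$ by a family extracted from the amalgam of the defect-exposing sequences, thereby repairing stationarity class by class. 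The combinatorial delicacy of closing off this refinement so that it terminates with Clause~(2) holding uniformly is the decisive step of the argument.
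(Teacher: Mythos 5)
Your proposal has a genuine gap, and you have correctly located it yourself: applying the Ulam matrix to the \emph{nonstationary} ideal only yields, for each fixed sequence $\vec B$, that $\lambda^+$ many columns of the chosen row meet $G_{\vec B}$ stationarily, whereas Clause~(2) demands that \emph{every} cell of the final partition meet $G_{\vec B}$ stationarily, simultaneously for all $2^{\lambda^+}$ many sequences $\vec B$. The proposed ``second-round diagonal refinement'' does not close this: there is no well-founded measure guaranteeing termination, and since the exceptional columns vary with $\vec B$ over a class of size $2^{\lambda^+}$, one cannot repair them one at a time in $\lambda^+$ steps. So as written the decisive step is missing.

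The fix is to reorder your two ingredients. You already have the key observation --- the interleaving of $\lambda$ many sequences $\vec B^i$ into a single $\vec B^*$ with $G_{\vec B^*}\s\bigcap_i G_{\vec B^i}$ modulo a club. This shows precisely that $\mathcal I:=\{T\s S\mid \exists\vec B\,(G_{\vec B}\cap T\text{ is nonstationary})\}$ is a $\lambda^+$-complete proper ideal on $S$ (properness is the hypothesis $S\notin\mathcal I$). Now run the Ulam-matrix argument \emph{relative to $\mathcal I$} rather than relative to the nonstationary ideal: since $\mathcal I$ is $\lambda^+$-complete and contains all bounded sets, Ulam's theorem gives $\lambda^+$ many pairwise disjoint $\mathcal I$-positive subsets of $S$, and absorbing the leftover into one of them yields a partition $\mathcal S$ of $S$ into $\mathcal I$-positive sets. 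Being $\mathcal I$-positive is exactly the statement that, for every $\vec B$, $G_{\vec B}\cap T$ is stationary, which is Clause~(2) for every $T\in\mathcal S$ at once; Clauses~(1) and~(3) transfer trivially as you note. This is the paper's argument, and your universality-of-a-row detour becomes unnecessary.
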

\begin{proof} Let $\langle \{ A_\alpha^i\mid i<\mu\}\mid \alpha\in S\rangle$ be an array witnessing that $\clubsuit_{\ad^*}(\{S\},\mu,\lambda^+)$ holds.
Let $\mathcal I$ denote the collection of all $T\s S$ such that $\langle \{ A_\alpha^i\mid i<\mu\}\mid \alpha\in T\rangle$ fails to witness that $\clubsuit_{\ad^*}(\{T\},\mu,\lambda^+)$ holds.
It is not hard to see that $\mathcal I$ is a $\lambda^+$-complete proper ideal on $S$.
By Ulam's theorem, then, $\mathcal I$ is not weakly $\lambda^+$-saturated,
meaning that we may fix a partition $\mathcal S$ of $S$ into $\lambda^+$-many $\mathcal I^+$-sets.
Then $\langle \{A_\alpha^i\mid i<\mu\}\mid \alpha\in S\rangle$ witnesses that $\clubsuit_{\ad^*}(\mathcal S,\mu,\lambda^+)$ holds.
\end{proof}
\begin{defn}[\cite{MR2048518}] $\diamondsuit(\mathfrak b)$ asserts that for every function $F:{}^{<\omega_1}2\rightarrow{}^\omega\omega$,
there exists a function $g:\omega_1\rightarrow{}^{\omega}\omega$ with the property that for every function $f:\omega_1\rightarrow2$,
the set $\{ \alpha<\omega_1\mid F(f\restriction\alpha)\le^* g(\alpha)\}$ is stationary.
\end{defn}

\begin{cor} Suppose that $\diamondsuit(\mathfrak b)$ holds. 
Then:
\begin{enumerate}
\item $\clubsuit_{\ad^*}(\mathcal S,1,\omega_1)$ holds for some partition $\mathcal S$ of $\omega_1$ into uncountably many stationary sets;
\item There exist $2^{\aleph_1}$ many pairwise nonhomeomorphic Dowker spaces of size $\aleph_1$.
\end{enumerate}
\end{cor}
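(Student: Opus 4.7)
The plan is to first establish $\clubsuit_{\ad^*}(\{\omega_1\},1,\omega_1)$ from $\diamondsuit(\mathfrak b)$, then invoke Lemma~\ref{l51} to split $\omega_1$ into $\aleph_1$ many stationary sets and obtain~(1); part~(2) will then follow from the Appendix announced in the introduction.

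For the core construction, the strategy is to mimic the recursion in the proof of the preceding $\stick$-theorem, with $\diamondsuit(\mathfrak b)$ playing the role of $\stick$. Fix, for each countable limit $\alpha$, a bijection $e_\alpha:\omega\to\alpha$, and fix a bijection $\pi:\omega\leftrightarrow\omega\times\omega$ with coordinates $\pi_0,\pi_1$. Via a standard bookkeeping bijection $\omega_1\leftrightarrow\omega_1\times\omega_1$, any sequence $\vec B=\langle B_i\mid i<\omega_1\rangle$ of cofinal subsets of $\omega_1$, paired with any sequence $\vec A=\langle A_\alpha\mid\alpha<\omega_1\rangle$ of countable subsets, is coded as a single $f_{\vec B,\vec A}:\omega_1\to 2$ such that $f_{\vec B,\vec A}\restriction\alpha$ recovers the relevant restrictions for club-many countable limits $\alpha$. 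Define an oracle $F:{}^{<\omega_1}2\to{}^\omega\omega$ so that, for $s:\alpha\to 2$ decoding to $(\vec B\restriction\alpha,\vec A\restriction\alpha)$, the value $F(s)(n)$ is the $e_\alpha^{-1}$-code of the next witness prescribed by the $\stick$-recursion: the least element of $B_{\pi_0(n)}\cap\alpha$ lying above $e_\alpha(F(s)(n-1))$ and outside $\bigcup\{A_{e_\alpha(j')}\mid j'\le n\}$ (with $0$ as default otherwise). Apply $\diamondsuit(\mathfrak b)$ to $F$ to obtain $g:\omega_1\to{}^\omega\omega$, and recursively define $A_\alpha$ for $\alpha\in\acc(\omega_1)$: call $\alpha$ \emph{good} if $\{e_\alpha(g(\alpha)(n))\mid n<\omega\}$ is cofinal in $\alpha$ and its $n$-th term avoids the corresponding leftover union, in which case set $A_\alpha:=\{e_\alpha(g(\alpha)(n))\mid n<\omega\}$; otherwise take $A_\alpha$ to be any default cofinal $\omega$-sequence almost-disjoint from $\{A_{\bar\alpha}\mid\bar\alpha<\alpha\}$.

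To verify clause~(2), given $\vec B$, $\diamondsuit(\mathfrak b)$ furnishes a stationary set of $\alpha$ with $F(f_{\vec B,\vec A}\restriction\alpha)\le^* g(\alpha)$. By a standard padding device (replacing $F(s)(n)$ by a value from which the witness is recoverable given any $\ge$-dominating integer) one arranges that on this stationary set $e_\alpha(g(\alpha)(n))$ actually equals the prescribed witness for cofinitely many $n$, so $\alpha$ is good and $A_\alpha$ meets each $B_i$ (for $i<\alpha$) cofinally, since for each such $i$ there are cofinally many $n$ with $\pi_0(n)=i$. Clause~$(3^*)$ is enforced by the ``avoid $\bigcup\{A_{e_\alpha(j')}\mid j'\le n\}$'' subtraction in the definition of $F$, exactly as in Claim~\ref{claim511}. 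The principal obstacle is the circular interdependence of $F$, $g$, and $\vec A$; this is broken by observing that the value $F(s)(n)$ for $s\in{}^\alpha 2$ only consults $A_{\bar\alpha}$ for $\bar\alpha<\alpha$, so $F$, $g$ and $\vec A$ can be assembled by a simultaneous recursion on $\alpha$, with $\diamondsuit(\mathfrak b)$ applied to the completed $F$ at the end.

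Finally, Lemma~\ref{l51} partitions $\omega_1$ into $\aleph_1$ many stationary sets $\mathcal S$ on which $\clubsuit_{\ad^*}(\mathcal S,1,\omega_1)$ holds, proving~(1). For~(2), note that $\clubsuit_{\ad^*}(\mathcal S,1,\omega_1)$ trivially implies $\clubsuit_{\ad}(\mathcal S,1,2)$ (clause~$(3^*)$ entails clause~(3), and $\omega_1$-term guessing specializes to $2$-term guessing), and that every stationary subset of $\omega_1$ is automatically nonreflecting. Therefore $\mathcal S$ is an infinite partition of a nonreflecting stationary subset of $\omega_1$ into $\mu:=\aleph_1$ many stationary sets, so the Appendix delivers $2^\mu=2^{\aleph_1}$ pairwise nonhomeomorphic Dowker spaces of size $\aleph_1$.
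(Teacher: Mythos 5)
Your overall architecture coincides with the paper's: reduce everything to $\clubsuit_{\ad^*}(\{\omega_1\},1,\omega_1)$, split $\omega_1$ into $\aleph_1$ many stationary sets via Lemma~\ref{l51} (using Ulam), and then feed the resulting partition into Theorem~\ref{manyd}; your observations that $(3^*)$ implies $(3)$, that $\omega_1$-guessing specializes to $2$-guessing, and that stationary subsets of $\omega_1$ are vacuously nonreflecting are all correct and are exactly what is needed. The difference is at the base step: the paper simply cites \cite[Theorem~5.5]{MR2048518} for ``$\diamondsuit(\mathfrak b)$ implies $\clubsuit_{\ad^*}(\{\omega_1\},1,\omega_1)$'', whereas you attempt to reprove it by transplanting the $\stick$-recursion.

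That transplanted construction has a genuine gap at the ``standard padding device''. The sequence $\langle A_\alpha\mid\alpha<\omega_1\rangle$ must be fixed \emph{before} the sequence $\vec B$ to be guessed is given, so $A_\alpha$ can only depend on $g(\alpha)$ and data available at stage $\alpha$. But $\diamondsuit(\mathfrak b)$ only guarantees $F(f_{\vec B,\vec A}\restriction\alpha)\le^* g(\alpha)$, i.e.\ an \emph{upper bound} on the code of the prescribed witness, not the code itself. Applying the bijection $e_\alpha$ to a larger integer produces an arbitrary, unrelated ordinal, so $e_\alpha(g(\alpha)(n))$ has no reason to lie in $B_{e_\alpha(\pi_0(n))}$ or to avoid the finite union of earlier $A$'s; and any attempt to ``recover the witness'' by searching below the bound $g(\alpha)(n)$ would have to consult $B_{e_\alpha(\pi_0(n))}\cap\alpha$, which is precisely the information that is not available when $A_\alpha$ is built. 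This is the whole difficulty of working with $\diamondsuit(\mathfrak b)$ rather than $\diamondsuit$ (where one could literally guess $\vec B\restriction\alpha$ and run your recursion locally), and the known proof circumvents it with a construction in which the object extracted from $g(\alpha)$ is monotone in $g(\alpha)$ in the appropriate sense. Your handling of the $F$/$g$/$\vec A$ circularity (coding $\vec A$ into the diamond parameter) is the standard move and is fine; the padding step is the one that does not survive scrutiny. The quickest repair is to do what the paper does and quote \cite[Theorem~5.5]{MR2048518} for the base case.
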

\begin{proof}  (1) By \cite[Theorem~5.5]{MR2048518}, $\diamondsuit(\mathfrak b)$ implies that 	$\clubsuit_{\ad^*}(\{\omega_1\},1,\omega_1)$ holds.
Now, the conclusion follows from Lemma~\ref{l51}.

(2) By Clause~(1) and Theorem~\ref{manyd} below.
\end{proof}

\renewcommand{\thesection}{A}
\section{Appendix: Many Dowker spaces}\label{many}

In this section, $\kappa$ denotes a regular uncountable cardinal.
By \cite[\S3]{paper48}, if $\clubsuit_{\ad}(\mathcal S,1,2)$ holds
for a partition $\mathcal S$ of some nonreflecting stationary subset of $\kappa$
into infinitely many stationary sets, then there exists a Dowker space of size $\kappa$.
Here, we demonstrate the advantage of $\mathcal S$ being large.
\begin{theorem}\label{manyd} Suppose that $\clubsuit_{\ad}(\mathcal S,1,2)$ holds,
where  $\mathcal S$ is a partition of a nonreflecting stationary subset of $\kappa$
into infinitely many stationary sets. Denote $\mu:=|\mathcal S|$.
Then there are $2^\mu$ many pairwise nonhomeomorphic Dowker spaces of size $\kappa$.
\end{theorem}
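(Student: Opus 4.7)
The plan is to produce $2^\mu$ many Dowker spaces $X_T$ of size $\kappa$ indexed by infinite subsets $T\subseteq\mu$, and to distinguish them via a topological invariant that recovers $T$ itself (not merely $|T|$). Since $\mu$ is infinite, there are $2^\mu$ many infinite subsets of $\mu$, so producing one such space per such subset will suffice.

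First I would observe that for each infinite $T\subseteq\mu$, the restriction $\langle\mathcal A_\alpha\mid\alpha\in\bigcup_{i\in T}S_i\rangle$ witnesses $\clubsuit_{\ad}(\mathcal S_T,1,2)$ for the partition $\mathcal S_T:=\{S_i:i\in T\}$, since $\bigcup\mathcal S_T\subseteq\bigcup\mathcal S$ is still a nonreflecting stationary subset of $\kappa$, partitioned into infinitely many stationary sets. Invoking the construction of \cite[\S3]{paper48} on each such restriction yields a Dowker space $Y_T$ of size $\kappa$. This gives $2^\mu$ many candidate spaces.

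The main obstacle is to distinguish them topologically. A priori the indices in $T$ play symmetric roles in the construction of $Y_T$, so a naive invariant would recover only $|T|$. To break the symmetry, I would enhance the construction by decorating each $\alpha\in S_i$ (for $i\in T$) with a topological ``tag'' $K_i$ drawn from a fixed injective family $\langle K_i\mid i<\mu\rangle$ of pairwise non-homeomorphic compact Hausdorff spaces of cardinality at most $\mu$ (for instance, $K_i:=\omega^{1+i}+1$ with the order topology). The decoration is implemented as a local bouquet at $\alpha$, identifying a distinguished point of $K_i$ with $\alpha$. Compactness and normality of the $K_i$'s ensure that the decorated space $X_T$ remains normal, and a careful check should show that the tags sit ``orthogonally'' to the separation witnesses for non-normality of $Y_T\times[0,1]$, so that $X_T\times[0,1]$ is still not normal. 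Hence $X_T$ is a Dowker space of size $\kappa$.

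The final step, and the most delicate one, is to verify that distinct $T\neq T'$ yield non-homeomorphic spaces. For this I would introduce the invariant $\mathrm{Spec}(X):=\{[K]:K\text{ is the homeomorphism type of a small closed neighborhood of an accumulation point of }X\text{ of the appropriate topological form}\}$, and show that $\mathrm{Spec}(X_T)=\{[K_i]:i\in T\}$. The injectivity of $i\mapsto[K_i]$ then forces any homeomorphism $X_T\to X_{T'}$ to induce $T=T'$. The crux of the verification is that each tag $K_i$ is detected from the intrinsic topology of $X_T$ — this relies on $K_i$ being compact (hence identifiable as a closed neighborhood) and on the local structure at non-tagged accumulation points (i.e., the original accumulation points in $Y_T$) being distinguishable, by a parity or character argument, from the tagged ones.
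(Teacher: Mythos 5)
There is a genuine gap, and it sits exactly at the heart of the theorem: the step that distinguishes the $2^\mu$ spaces from one another. Your construction of the candidate spaces (restrict the $\clubsuit_{\ad}$-sequence to $\mathcal S_T$ and invoke \cite[\S3]{paper48}) is fine as far as it goes, but the ``tagging'' device you propose for telling them apart does not work as described. First, with $K_i:=\omega^{1+i}+1$, the smaller tags occur as closed neighborhoods of points \emph{inside} the larger tags: for $i<j$, every copy of $\omega^{1+j}+1$ contains points whose small closed neighborhoods are homeomorphic to $\omega^{1+i}+1$. So your invariant $\mathrm{Spec}(X_T)$ would at best recover something like the downward closure of $T$, not $T$ itself, and cannot separate $T$ from $T'$ with the same closure. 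Second, even ignoring that, $\omega+1$ (and more generally small countable compacta) occurs as a closed neighborhood of a point in essentially any space containing a nontrivial convergent sequence, so untagged accumulation points of $Y_T$ (whose ladders do converge) contaminate the spectrum; the phrase ``of the appropriate topological form'' and the appeal to ``a parity or character argument'' are placeholders for the missing argument, not a proof. Third, normality of the decorated space and non-normality of its product with $[0,1]$ are asserted rather than checked (the latter is actually easy, since $Y_T$ sits as a closed subspace of $X_T$; the former needs an argument). The net effect is that the only part of the theorem that goes beyond \cite[\S3]{paper48} --- producing \emph{pairwise nonhomeomorphic} spaces --- is not established.

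For contrast, the paper avoids external decorations entirely. It re-enumerates $\mathcal S$ as $\langle S^\zeta_n\mid\zeta<\mu,\,n<\omega\rangle$ and, for each nonempty $Z\s\mu$, builds a ladder-system space $\mathbb X^Z$ on the \emph{same} underlying set $\kappa$, where the ladder at $\alpha$ is $A_\alpha$ intersected with the union of the ``lower levels'' $W^Z_{n^Z(\alpha)-1}$ of the $Z$-stratification. The distinguishing invariant is then purely combinatorial: the set $S^Z$ of points with nontrivial ladders satisfies that $S^Z\setminus S^{Z'}$ is \emph{stationary} whenever $Z\setminus Z'\neq\emptyset$, and any homeomorphism $f:\mathbb X^Z\to\mathbb X^{Z'}$ fixes club-many initial segments setwise, so one finds $\alpha\in S^Z\setminus S^{Z'}$ with $f^{-1}[\alpha]=\alpha$ and derives a contradiction from continuity at $\alpha$ (the image point has a neighborhood contained in $\kappa\setminus\alpha$, whose preimage must meet $L^Z_\alpha$ cofinally but cannot). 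If you want to salvage your approach, you would need a family of tags that are mutually non-embeddable as neighborhoods and detectably different from the intrinsic local structure of the ladder-system space; the paper's route shows this machinery is unnecessary.
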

\begin{proof}
Fix an injective enumeration $\langle S^\zeta_{n}\mid \zeta<\mu, n<\omega\rangle$ of the elements of $\mathcal S$. 
As $\clubsuit_{\ad}(\mathcal S,1,2)$ holds, 
we may fix a sequence
$\langle A_{\alpha}\mid\alpha\in\bigcup\mathcal S\rangle$ such that:
\begin{itemize}
\item[(i)] For every $\alpha\in\bigcup\mathcal S$, $A_\alpha$ is a subset of $\alpha$, and for every $\alpha'\in\alpha\cap\bigcup\mathcal S$, $\sup(A_{\alpha'}\cap A_\alpha)<\alpha'$;
\item[(ii)] For all $B_0,B_1\in[\kappa]^{\kappa}$ and $(\zeta,n)\in\mu\times\omega$,
the following set is stationary:
$$G(S^\zeta_n,B_0,B_1):=\{\alpha\in S^\zeta_n\mid  \sup(A_\alpha\cap B_0)=\sup(A_\alpha\cap B_1)=\alpha\}.$$
\end{itemize}

For every nonempty $Z\s\mu$, we shall want to define a topological space $\mathbb X^Z$. 
To this end, fix a nonempty $Z\s\mu$.
For every $n<\omega$, let $S^Z_{n+1}:=\biguplus_{\zeta\in Z}S^\zeta_{n+1}$,
and then let $S^Z_0:=\kappa\setminus\biguplus_{n<\omega} S^Z_{n+1}$.
For every $\alpha<\kappa$, let $n^Z(\alpha)$ denote the unique $n<\omega$ such that $\alpha\in S^Z_n$.
For each $n<\omega$, let $ W^Z_n:=\bigcup_{i\leq n}S^Z_i $. 
Then, define a sequence $\vec {L^Z}=\langle L^Z_\alpha\mid\alpha<\kappa\rangle$ via:
$$L^Z_\alpha:=\begin{cases}
W^Z_{n^Z(\alpha)-1}\cap A_\alpha,&\text{if }n^Z(\alpha)>0\ \&\ \sup(W^Z_{n^Z(\alpha)-1}\cap A_\alpha)=\alpha;\\
\emptyset,&\text{otherwise.}
\end{cases}$$

Denote $S^Z:=\{ \alpha\in\acc(\kappa)\mid \sup(L^Z_\alpha)=\alpha\}$.
Finally, let $\mathbb X^Z=(\kappa,\tau^Z)$ 
be the ladder-system space determined by $\vec{L^Z}$, that is, a subset $U\s\kappa$ is $\tau^Z$-open iff, for every $\alpha\in U\cap S^Z$, $\sup(L^Z_\alpha\setminus U)<\alpha$.

\begin{claim}\label{lemma61} Let $Z$ and $Z'$ be nonempty subsets of $\mu$. Then:
\begin{enumerate}
\item For all $n<\omega$ and $\alpha\in S^Z_{n+1}$, $L^Z_\alpha\s W^Z_n$;
\item If $Z\setminus Z'$ is nonempty, then $S^Z\setminus S^{Z'}$ is stationary;
\item For all $\alpha\neq\alpha'$ from $S^Z$, $\sup(L^Z_\alpha\cap L^Z_{\alpha'})<\alpha$;
\item For all $B_0,B_1\in[\kappa]^\kappa$, there exists  $m<\omega$ such that, for every $n\in\omega\setminus m$, the following set is stationary:
\[ \{ \alpha\in S^Z_n \mid \sup(L^Z_\alpha\cap B_0)=\sup(L^Z_\alpha\cap B_1)=\alpha  \};\]
\item $S^Z$ is a nonreflecting stationary set.
\end{enumerate}
\end{claim}
\begin{proof} (1) Clear.

(2) Suppose that $Z\setminus Z'\neq\emptyset$, and pick $\zeta\in Z\setminus Z'$. 
As $W^Z_0=S^Z_0\supseteq S^\zeta_0$, the former is cofinal.
So, $S^Z\setminus S^{Z'}$ covers the stationary set $G(S^\zeta_1,W^Z_0,\kappa)$.

(3) For all $\alpha\neq\alpha'$ from $S^Z$, $\sup(L^Z_\alpha\cap L^Z_{\alpha'})\le\sup(A_\alpha\cap A_{\alpha'})<\alpha$.

(4) Pick $\zeta\in Z$. Given two cofinal subsets $B_0,B_1$ of $\kappa$, 
find $m_0,m_1<\omega$ be such that $|B_0\cap S^Z_{m_0}|=|B_1\cap S^Z_{m_1}|=\kappa$. Set $m:=\max\{m_0,m_1\}+1$. Then, for every $n\in\omega\setminus m$, 
$$G(S_n^\zeta,B_0\cap S^Z_{m_0},B_1\cap S^Z_{m_1})\s \{ \alpha\in S^Z_n \mid \sup(L^Z_\alpha\cap B_0)=\sup(L^Z_\alpha\cap B_1)=\alpha\}$$ and hence the latter is stationary.

(5) By Clause~(4), $S^Z$ is stationary. As $S^Z\s\bigcup_{n<\omega}S^Z_{n+1}\s\bigcup\mathcal S$,
and since $\bigcup\mathcal S$ is a nonreflecting stationary set, so is $S^Z$.
\end{proof}

By the preceding claim,
and the results of \cite[\S3]{paper48}, for every nonempty $Z\s\mu$, $\mathbb X^Z$ is a Dowker space. Thus we are left with proving the following:

\begin{claim} Suppose that $Z$ and $Z'$ are two distinct nonempty subsets of $\mu$.
Then $\mathbb X^Z$ and $\mathbb X^{Z'}$ are not homeomorphic.
\end{claim}
\begin{proof} Without loss of generality, we may pick $\zeta\in Z\setminus Z'$. Towards a contradiction,
suppose that $f:\kappa\leftrightarrow\kappa$ forms an homeomorphism from $\mathbb X^Z$ to $\mathbb X^{Z'}$.
As $f$ is a bijection, there are club many $\alpha<\kappa$ such that $f^{-1}[\alpha]=\alpha$.
By Claim~\ref{lemma61}(2), then, we may pick some $\alpha\in S^Z\setminus S^{Z'}$ such that $f^{-1}[\alpha]=\alpha$.
Set $\beta:=f(\alpha)$.

$\br$ If $\beta\notin S^{Z'}$, then $U:=\{\beta\}$ is a $\tau^{Z'}$-open neighborhood of $\beta$.

$\br$ If $\beta\in S^{Z'}$, then $\beta>\alpha+1$ and the ordinal interval $U:=[\alpha+1,\beta+1]$ is a $\tau^{Z'}$-open neighborhood of $\beta$.

In both cases, $U\s\kappa\setminus\alpha$, so that $f^{-1}[U]\s f^{-1}[\kappa\setminus\alpha]=\kappa\setminus\alpha$.
As $f$ is continuous and $U$ is a $\tau^{Z'}$-open neighborhood of $f(\alpha)$, $f^{-1}[U]$ must be a $\tau^Z$-open neighborhood of $\alpha$, contradicting the fact that $f^{-1}[U]$ is disjoint from $L^Z_\alpha$.
\end{proof}
This completes the proof.
\end{proof}
\section*{Acknowledgments}
The first author is partially supported by the European Research Council (grant agreement ERC-2018-StG 802756) and by the Israel Science Foundation (grant agreement 2066/18). 
The second author is supported by the European Research Council (grant agreement ERC-2018-StG 802756).

\end{document}